\renewcommand{\epsilon}{\varepsilon}
\renewcommand{\phi}{\varphi}
\newcommand{\epsi}{\varepsilon}
\numberwithin{equation}{section}
\newtheoremstyle{thmlemcorr}{10pt}{10pt}{\itshape}{}{\bfseries}{.}{10pt}{{\thmname{#1}\thmnumber{
#2}\thmnote{ (#3)}}}
\newtheoremstyle{thmlemcorr*}{10pt}{10pt}{\itshape}{}{\bfseries}{.}\newline{{\thmname{#1}\thmnumber{
\newtheoremstyle{defi}{10pt}{10pt}{}{}{\bfseries}{.}{10pt}{{\thmname{#1}\thmnumber{
#2}\thmnote{ (#3)}}}
\newtheoremstyle{remexample}{10pt}{10pt}{}{}{\bfseries}{.}{10pt}{{\thmname{#1}\thmnumber{
#2}\thmnote{ (#3)}}}
\newtheoremstyle{ass}{10pt}{10pt}{}{}{\bfseries}{.}{10pt}{{\thmname{#1}\thmnumber{
A#2}\thmnote{ (#3)}}}
\theoremstyle{thmlemcorr}
\newtheorem{theorem}{Theorem}
\numberwithin{theorem}{section}
\newtheorem{lemma}[theorem]{Lemma}
\newtheorem{proposition}[theorem]{Proposition}
\theoremstyle{thmlemcorr*}
\newtheorem{theorem*}{Theorem}
\newtheorem{lemma*}[theorem]{Lemma}
\newtheorem{corollary*}[theorem]{Corollary}
\newtheorem{proposition*}[theorem]{Proposition}
\newtheorem{problem*}[theorem]{Problem}
\newtheorem{conjecture*}[theorem]{Conjecture}
\theoremstyle{defi}
\newtheorem{definition}[theorem]{Definition}
\theoremstyle{remexample}
\newtheorem{remark}[theorem]{Remark}
\providecommand{\loc}{{\ensuremath{\mathrm{loc}}}}
\newcommand{\R}{\mathbb{R}}
\newcommand{\N}{\mathbb{N}}
\newcommand{\Rn}{{\mathbb{R}^n}}
\newcommand{\RR}{\mathbb{R}}
\newcommand{\NN}{\mathbb{N}}
\renewcommand{\le}{\leqslant}
\renewcommand{\ge}{\geqslant}
\renewcommand{\leq}{\leqslant}
\renewcommand{\geq}{\geqslant}
\newcommand{\px}{{p(\cdot)}}
\newcommand{\qx}{{q(\cdot)}}
\newcommand{\phix}{{\phi(\cdot)}}
\newcommand{\psix}{{\psi(\cdot)}}
\newcommand{\hatrho}{{\psi_\epsilon}}
\newcommand{\phiu}{{\phi_\uparrow}}
\newcommand{\phid}{{\phi_\downarrow}}
\newcommand{\cu}{{c_\uparrow}}
\newcommand{\cd}{{c_\downarrow}}
\DeclareMathOperator{\dist}{dist}
\providecommand{\varitem}{} 
\newenvironment{hypotheses}[1]
 {\renewcommand\varitem[1]{\item[
 {#1\arabic{enumi}\rlap{##1}.}]%
    \edef\@currentlabel{#1\arabic{enumi}{##1}}}%
  \enumerate[label={({#1\arabic*})}, ref=#1\arabic*,
  leftmargin=18mm]}
 {\endenumerate}
\providecommand{\varitem}{} 
\newenvironment{hypothesesD}[1]
 {\renewcommand\varitem[1]{\item[
 {#1\rlap{##1}.}]%
    \edef\@currentlabel{#1{##1}}}%
  \enumerate[label={({#1})}, ref=#1]}
 {\endenumerate}
\providecommand{\varitem}{} 
\newcommand{\aev}{{a.e.}}
\begin{document}

\title{Characterization of generalized Orlicz spaces}

\author[Rita Ferreira]{Rita Ferreira}
\address[R.~Ferreira]{King Abdullah University
of Science
and Technology (KAUST), CEMSE Division,  Thuwal 23955-6900,
Saudi Arabia.}
\email{rita.ferreira@kaust.edu.sa}
\urladdr{http://www.ritaferreira.pt}

\author[Peter H\"{a}st\"{o}]{Peter H\"{a}st\"{o}}
\address[P.~H\"{a}st\"{o}]{Department of Mathematical Sciences,
P.O.\ Box 3000, FI-90014 University of Oulu, Finland
\\ and
\\
Department of Mathematics and Statistics, University of Turku, Finland}
\email{peter.hasto@oulu.fi}
\urladdr{http://cc.oulu.fi/$\sim$phasto/}


\author[Ana Margarida Ribeiro]{Ana Margarida Ribeiro}
\address[A.M.~Ribeiro]{Centro de Matem\'{a}tica e Aplica\c{c}\~{o}es (CMA) and Departamento de Matem\'{a}tica, Faculdade de Ci\^{e}ncias e Tecnologia, Universidade Nova de Lisboa,
Quinta da Torre, 2829-516 Caparica, Portugal}
\email{amfr@fct.unl.pt}
\urladdr{https://sites.google.com/site/anaribeirowebpage/home}

\subjclass[2010]{ 46E35, 46E30}
\keywords{Musielak--Orlicz spaces, Orlicz space, Sobolev space, variable exponent, Poincar\'e inequality}

\begin{abstract}
The norm in classical Sobolev spaces can be expressed as a difference
quotient. This expression can be used to generalize the space
to the fractional smoothness case. Because the difference
quotient is based on shifting the function, it cannot be
used in generalized Orlicz spaces. In its place,
we introduce a smoothed difference quotient and show that
it can be used to characterize the generalized
Orlicz--Sobolev space. Our results are new even in
Orlicz spaces and variable exponent spaces.
\end{abstract}

\date{\today}

\maketitle



\section{Introduction}

Bourgain, Brézis, and Mironescu \cite{BoBrMi1,BoBrMi2} studied the limit
behavior of the Gagliardo semi-norms
\[
||f||^p_{W^{s,p}} =
\int_\Omega \int_\Omega \frac{|f(x)-f(y)|^p}{|x-y|^{n+sp}} \, dx\, dy,
\quad 0<s<1,
\]
as $s\to 1$, and established the appropriate scaling factor for comparing the limit
with the $L^p$-norm of the gradient of $f$. They characterized the Sobolev space $W^{1,p}$ and
proved the convergence of certain imaging models of Aubert and Kornprobst \cite{AK09} to
the well-known total variation model of Rudin, Osher, and Fatemi \cite{ROF92}.
Our aim in this paper is to extend the characterization to generalized Orlicz
spaces defined on open subsets of $\Rn$.

Generalized Orlicz spaces $L^\phix$ have been studied since the 1940's.
A major synthesis of functional analysis in these spaces is given in the
monograph of Musielak \cite{Mus83} from 1983, for which
reason they have also been called Musielak--Orlicz spaces. These
spaces are similar to
Orlicz spaces, but defined by a more general function $\phi(x,t)$
that may vary with the location in space:
the norm is defined by means of the integral
\[
\int_\Rn \phi(x, |f(x)|)\, dx,
\]
whereas in an Orlicz space, $\phi$ would be independent of $x$,
$\phi(|f(x)|)$. When $\phi(t)=t^p$, we obtain the
Lebesgue spaces, $L^p$.
Generalized Orlicz spaces are motivated
by applications to image processing \cite{CheLR06, HarHLT13},
fluid dynamics \cite{Swi14}, and differential equations
\cite{BarCM16, GiaP13}. Recently, harmonic analysis in this
setting has been studied e.g.\ in \cite{CruH_pp16, Has15, MaeMOS13a}.

We have in mind two principal classes of examples of
generalized Orlicz spaces:
variable exponent spaces $L^\px$, where
$\phi(x,t):= t^{p(x)}$ \cite{CruF13, DieHHR11},
and
dual phase spaces, where $\phi(x,t):= t^p + a(x) t^q$
\cite{BarCM15, BarCM16, ColM15a, ColM15b, ColM16}.
It is interesting to note that our general methods give optimal
results in these two disparate cases, cf.\ \cite{Has15}.
Also covered are variants of the variable exponent
case such as $t^{p(x)}\log(e+t)$ \cite{GiaP13, MizOS08, Ok16a, Ok16b}.

It is not difficult to see that a direct generalization of the difference quotient
to the non-translation invariant generalized Orlicz case is not possible
\cite[Section~1]{HasR_pp15}. For instance Besov and Triebel--Lizorkin spaces
in this context have been defined using Fourier theoretic approach
\cite{AlmH10, DieHR09}. Hästö and Ribeiro \cite{HasR_pp15},
following \cite{DieH07}, adopted a more
direct approach with a smoothed difference quotient expressed by means of the
sharp averaging operator $M^\#_{B(x,r)}$. This is the general approach adopted also
in this paper. This paper improves \cite{HasR_pp15} in three major
ways:
\begin{enumerate}
\item
Instead of variable exponent spaces, we consider more general generalized Orlicz
spaces;
\item
Instead of $\Rn$, we allow arbitrary open sets $\Omega\subset\Rn$; and
\item
In our main result, we relax the technical assumption
$f\in L^1(\Omega)$ to its natural form; i.e.,\ $f\in L^1_\loc(\Omega)$.
\end{enumerate}
The latter two generalizations have been previously established in
the case $L^p$ by Leoni and Spector \cite{LeoS11}, see also \cite[Section~1]{FerKR15}.
In order to achieve these goals, the methods of the main results
(Section~\ref{sect:main}) are completely different from
those in \cite{HasR_pp15} and involve a new bootstrapping scheme.

\bigskip

We introduce some notation to state our main result. We refer to the
next section for the precise definition of $L^\phix$ and the assumptions
in the theorem. Let \(\Omega\subset \RR^n\) be an open set,
\(\phi \in \Phi_w(\Omega) \),  and \(\epsi>0\). Let \(\psi_\epsi\)
be a set of functions such that
\begin{equation}\label{condition rho 1}
\hatrho \in L^1(0,\infty), \quad \hatrho \ge 0,\quad
\int_0^\infty \hatrho(r)\, dr = 1,
\end{equation}
and, for every $\gamma>0$,
\begin{equation}\label{condition rho 2}
\lim_{\epsilon\to 0^+} \int_\gamma^\infty \hatrho(r)\,
dr =0.
\end{equation}
We define a weak quasi-semimodular, \(\varrho^\epsilon_{\#,\Omega}(f)\),
on \(L^1_{\rm loc}(\Omega)\) by setting
\begin{align*}
\varrho^\epsilon_{\#,\Omega}(f):=
\int_0^\infty \int_{\Omega_r} \phi \left(x, \tfrac1r M^\#_{B(x,r)}f\right) \, dx \,  \hatrho(r)\, dr\quad
\end{align*}
for  \(f\in L^1_{\rm loc}(\Omega)\), where $\Omega_r:=\{x\in\Omega:\ \dist(x,\partial\Omega)>r\}$
and\[
M^\#_{B(x,r)}f := \fint_{B(x,r)} | f(y) - f_{B(x,r)} | \, dy\quad \text{with}\quad
f_{B(x,r)}:=\fint_{B(x,r)}f(y)\,dy.
\]
The associated quasi-norm, \(\| f\|^\epsilon_{\#,\Omega}  \), is defined by
\[
\| f\|^\epsilon_{\#,\Omega}  := \inf\big\{ \lambda>0 \,|\, \varrho^\epsilon_{\#,\Omega}(f/\lambda) \le 1\big\}.
\]

The following is our main result stated for a $\Phi$-function---it is also possible
to state it for weak $\Phi$-functions, see Theorem~\ref{main theorem gen}.
The proof follows from Propositions~\ref{prop:SobolevCase} and
\ref{prop:showsSobolev}.

\begin{theorem}\label{main theorem}
Let $\Omega\subset\Rn$ be an open set, let $\phi\in \Phi(\Omega)$ satisfy 
Assumptions~\eqref{A}, \eqref{aInc}, and \eqref{aDec},
and let $(\hatrho)_\epsilon$ be a family of functions
satisfying \eqref{condition rho 1} and \eqref{condition rho 2}.
Assume  that $f\in L^1_{\rm loc}(\Omega)$.
Then,
\begin{equation*}
\begin{aligned}
\nabla f\in L^{\phix}(\Omega;\Rn)
\quad\Leftrightarrow\quad
\limsup_{\epsilon\to 0^+} \varrho^\epsilon_{\#,\Omega}(f) <\infty.
\end{aligned}
\end{equation*}
In this case,
\begin{equation*}
\lim_{\epsilon\to0^+} \varrho^\epsilon_{\#,\Omega}(f)
= \varrho_{\phix,\Omega}(c_n |\nabla f|)
\qquad\text{and}\qquad
\lim_{\epsilon\to0^+}\| f\|^\epsilon_{\#,\Omega} = c_n \| \nabla f\|_{\phix,\Omega},
\end{equation*}
where $c_n := \fint_{B(0,1)} |x\cdot e_1| \, dx$.
\end{theorem}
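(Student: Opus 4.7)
The plan is to split the biconditional into its two implications and, along the way, establish the modular identity $\lim_{\epsilon\to 0^+} \varrho^\epsilon_{\#,\Omega}(f) = \varrho_{\phix,\Omega}(c_n|\nabla f|)$, from which the norm equality follows by the unit-ball characterization of the Luxemburg norm together with the strict monotonicity of the modular in $\lambda$ provided by \eqref{aInc}.

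\emph{Forward direction.} Suppose $\nabla f \in L^{\phix}(\Omega;\Rn)$. The fundamental pointwise fact is that, for almost every $x \in \Omega$,
\[
\tfrac{1}{r} M^\#_{B(x,r)} f \longrightarrow c_n \,|\nabla f(x)| \quad\text{as } r \to 0^+,
\]
with the constant $c_n = \fint_{B(0,1)} |z\cdot e_1|\,dz$ arising from the first-order Taylor expansion $f(y) \approx f(x) + \nabla f(x)\cdot(y-x)$, which forces $f_{B(x,r)}\to f(x)$ and the integrand to scale like $r\,|\nabla f(x) \cdot z|$ after substitution. Combined with \eqref{A} and Lebesgue's dominated convergence theorem, this gives the convergence of the inner integral $\int_{\Omega_r}\phi(x,\tfrac{1}{r} M^\#_{B(x,r)} f)\,dx$ to $\varrho_{\phix,\Omega}(c_n|\nabla f|)$ as $r\to 0^+$, once the exhaustion of $\Omega$ by $\Omega_r$ is accounted for. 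The approximate-identity property \eqref{condition rho 1}--\eqref{condition rho 2} of $\hatrho$ then converts this into convergence of the full $r$-integral through an $\epsilon$-$\gamma$ split: the contribution from $r$ near $0$ is governed by the pointwise limit, while the contribution from $r \geq \gamma$ vanishes in $\epsilon$ once one has a uniform-in-$r$ envelope. Such an envelope comes from the pointwise estimate $\tfrac{1}{r} M^\#_{B(x,r)} f \leq C\, M|\nabla f|(x)$ (Poincar\'e on balls, then the Hardy--Littlewood maximal function) together with the maximal inequality in $L^{\phix}$, valid under \eqref{A}, \eqref{aInc}, and \eqref{aDec}.

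\emph{Backward direction.} From only $f\in L^1_{\rm loc}(\Omega)$ with $\limsup_\epsilon \varrho^\epsilon_{\#,\Omega}(f) < \infty$, one must produce a weak gradient that lies in $L^{\phix}$. The strategy is the bootstrap announced in the introduction: extract from the modular bound, via \eqref{aInc}, a uniform $L^1$-type estimate on $\tfrac{1}{r} M^\#_{B(x,r)} f$ over compact subsets of $\Omega$ for $r$ in a suitable range, strong enough to conclude $f \in W^{1,1}_{\rm loc}(\Omega)$---for instance through a Hajlasz-type characterization of Sobolev functions using the sharp averages as a substitute for pointwise gradient bounds. Once weak differentiability is in hand, the pointwise limit of the forward direction gives $\lim_{r\to 0^+} \tfrac{1}{r} M^\#_{B(x,r)} f = c_n|\nabla f(x)|$ almost everywhere, and Fatou's lemma applied through $\hatrho$ yields
\[
\varrho_{\phix,\Omega}(c_n|\nabla f|) \leq \liminf_{\epsilon\to 0^+} \varrho^\epsilon_{\#,\Omega}(f) < \infty,
\]
so that $\nabla f \in L^{\phix}(\Omega;\Rn)$; the forward direction then upgrades the inequality to equality.

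\emph{Main obstacle.} The delicate step is the bootstrap: turning a modular bound under a generalized Orlicz integrand $\phi$---which may be very flat near $0$ and highly inhomogeneous in $x$---into classical local regularity sufficient for weak differentiability, while respecting the mildness of \eqref{aInc}--\eqref{aDec} and the arbitrariness of $\Omega$. Passing from $f\in L^1$ to $f\in L^1_{\rm loc}$ forces all estimates to be purely local, and cut-off procedures interact poorly with the non-translation-invariant integrand $\phi(x,\cdot)$, so the bootstrap must avoid global considerations and be iterated to propagate regularity from the rough Lebesgue level up to the $L^{\phix}$ level.
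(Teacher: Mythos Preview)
Your overall architecture---forward via a dominated-convergence argument, backward via a bootstrap (first local Sobolev regularity, then upgrade to $L^{\phix}$)---matches the paper's, but the execution of each step differs, and one step has a genuine gap.

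\textbf{Forward direction.} The paper does \emph{not} invoke the Hardy--Littlewood maximal theorem. It first proves the exact limit for smooth $f$ (Lemma~\ref{lem:smooth}), then approximates a general $f$ by smooth $g_\nu$ on $U\subset\subset\Omega$, controlling the error $\varrho^\epsilon_{\#,U}(f-g_\nu)$ via Poincar\'e combined with a Jensen-type ``key estimate'' (Lemma~\ref{lem:key-estimate} and \cite[Lemma~4.4]{Has16}); a separate tail argument handles $\Omega_r\setminus U_r$. Your route through the dominant $\phi(\cdot,CM|\nabla f|)$ is legitimate under the stated hypotheses (boundedness of $M$ on $L^{\phix}$ is the main result of \cite{Has15}), but you are importing a heavier tool than the paper uses. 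Also, the pointwise limit $\tfrac1r M^\#_{B(x,r)}f\to c_n|\nabla f(x)|$ does \emph{not} come from ``Taylor expansion'' for non-smooth $f$; what is needed is $L^1$-approximate differentiability at Lebesgue points of $\nabla f$.

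\textbf{Backward direction, local regularity.} This is the gap. A ``uniform $L^1$-type estimate'' plus a ``Hajlasz-type characterization'' does not give $f\in W^{1,1}_{\rm loc}$ as you describe: the Hajlasz criterion requires a \emph{pointwise} upper gradient in some $L^p$, whereas the hypothesis is an $\hatrho$-averaged modular bound, and the passage from $\limsup_\epsilon\int_0^\infty F(r)\hatrho(r)\,dr<\infty$ to control of $F(r)$ for individual small $r$ is not automatic. The paper's Lemma~\ref{lem:pMinus} resolves this by \emph{mollifying}: $G_\delta*f$ is smooth, so Lemma~\ref{lem:smooth} applies with the autonomous $\xi\simeq\phi_U^-$; convexity of $\xi$ pushes the mollifier through and bounds $\varrho^\epsilon_{\#,U_\delta}(G_\delta*f)$ by $\varrho^\epsilon_{\#,U}(f)$; this makes $(\nabla(G_\delta*f))_\delta$ bounded in the reflexive space $L^\xi$, and a weak-limit argument yields $\nabla f\in L^{\phi_U^-}(U)\hookrightarrow L^{\phiu}(U)$. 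The mollification and the detour through the autonomous $\phi_U^-$ are the missing ideas in your sketch.

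\textbf{Backward direction, upgrade to $L^{\phix}$.} Here your Fatou argument is correct and in fact \emph{simpler} than the paper's Proposition~\ref{prop:showsSobolev}, which instead re-mollifies and uses the key estimate again at the level of the $x$-dependent $\phi$ before invoking Proposition~\ref{prop:SobolevCase}. Once $f\in W^{1,1}_{\rm loc}$ and $\phi(x,\cdot)$ is continuous (which it is for $\phi\in\Phi(\Omega)$ with \eqref{aDec}), the pointwise limit $\tfrac1r M^\#_{B(x,r)}f\to c_n|\nabla f(x)|$ a.e.\ and the approximate-identity lower bound for $\hatrho$ combine with Fatou in $x$ to give $\varrho_{\phix,\Omega}(c_n|\nabla f|)\le\liminf_\epsilon\varrho^\epsilon_{\#,\Omega}(f)$ directly.
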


\begin{remark}
Note that the previous result is new even in the case of classical Orlicz spaces.
In this case, Assumption~\eqref{A} automatically holds.
\end{remark}

\begin{remark}
In the case $\phi(x,t)=t^{p(x)}$,
Assumptions~\eqref{A0} and \eqref{loc} always hold,
while Assumptions~\eqref{A1} and \eqref{A2} are equivalent to
the local $\log$-Hölder continuity and Nekvinda's decay condition, respectively.
Moreover, if \(p_-:=\inf_{x\in\Omega}
p(x) >1\), then \eqref{aInc} holds;
and if  \(p_+:=\sup_{x\in\Omega}
p(x) <\infty\), then \eqref{aDec} holds.
\end{remark}


\section{Preliminaries}\label{sec:prelim}

This section is organized as follows. In Subsection~\ref{subsec:not},  we collect some notation
used throughout this paper. Then, in Subsection~\ref{subsec:PhiOrlicz},
we recall the definition of  \(\Phi\)-functions and of
some of its  generalizations; we  recall also the associated
Orlicz spaces, norms, and semimodulars.
Finally, in Subsection~\ref{subsec:mainass}, we introduce
and discuss our
main assumptions on the (generalized weak) \(\Phi\)-functions and  relate
them with other assumptions
in the literature. We conclude by proving two auxiliary
results, Lemmas~\ref{lem:wpm} and \ref{lem:key-estimate}. The first one can be interpreted as  a counterpart in our setting of the weighted power-mean inequality for the function \(\varphi(x, t)
= t^p\) for some \(p\geq 1\); the second one is a Jensen-type inequality
in the spirit of
\cite[Lemma~4.4]{Has15}
and \cite[Lemma~2.2]{HasR_pp15}.

\subsection{\sc Notation}\label{subsec:not}

We denote by $\mathbb{R}^{n}$ the $n$-dimensional real
Euclidean space. We write $B(x,r)$ for the open ball
in
$\mathbb{R}^{n}$ centered at $x\in \mathbb{R}^{n}$ and
with
radius $r>0$.
We use $c$ as a generic positive constant; i.e.,\ a constant
whose
value may change from appearance to appearance.
If $E\subset {\mathbb{R}^{n}}$ is a  measurable
set, then $|E|$ stands for its (Lebesgue) measure and
$\chi_{E}$ denotes its characteristic function;
we denote by \(L^0(E)\)  the space of all Lebesgue measurable functions on \(E\).

 Let \(g,\, h: D\subset\mathbb{R}^m \to [0,\infty]\) be
two functions. We write \(g\lesssim h\) to mean that there exists a positive constant,
\(C\), such that \(g(z)\leq C h(z)\)
for all
\(z\in D\). If  \(g\lesssim h \lesssim g\), we write   \(g\approx h\).
Also, given a sequence \((g_\epsi)_\epsi\) of non-negative
functions in \(D\), the notation \(\lim_{\epsi\to 0} g_\epsi
\approx h\) means that there is a positive constant,
\(C\), such that \(\tfrac1{C} h\leq\liminf_{\epsi\to0} g_\epsi
\leq \limsup_{\epsi\to0} g_\epsi \leq C h\) in \(D\); i.e.,\
for the equivalence we do not require the limit to exist, only
the upper and lower limits to be within a constant of each other.

Moreover, if \(D=[0,\infty) \), we say that \(g\) and \(h\) are
\textit{equivalent}, written \(g\simeq h\), if there exists \(L\geq 1\)
such that for all \(t\ge 0\), we have  \(h(\frac{t}{L})
\leq g(t) \leq h(Lt)\).
In this case,  \(L\) is said to be the \textit{equivalence constant}.

Let \(\varphi:[0,\infty) \to [0,\infty]\) be an increasing
function.  We denote by \(\phi^{-1}:[0,\infty]\to[0,\infty]\) the
 left-continuous generalized inverse of \(\phi\); that
is, for all \(s\in[0,\infty]\),
\begin{equation}\label{definverse}
\begin{aligned}
\phi^{-1}(s) := \inf\{t\geq0|\,\phi(t)\geq s\}.
\end{aligned}
\end{equation}
Let \(\Omega\subset\RR^n\) be  open,  let \(\phi:\Omega\times
[0,\infty) \to [0,\infty]\), and
let \(B\subset\RR^n\). Then, \(\phi^+_B:[0,\infty) \to
[0,\infty]\) and \(\phi^-_B:[0,\infty) \to
[0,\infty]\) are the functions defined for all \(t\in
[0,\infty)\) by
\begin{equation*}
\begin{aligned}
\phi^+_B(t):= \sup_{x\in \Omega\cap B} \phi(x,t) \quad
\hbox{and} \quad \phi^-_B(t):= \inf_{x\in \Omega\cap B} \phi(x,t).
\end{aligned}
\end{equation*}
Note that if $\phi(x,\cdot)$ is increasing for  every \(x\in\Omega\), then so are $\phi^+_B(\cdot)$ and $\phi^-_B(\cdot)$;  thus, these functions admit a left-continuous  generalized inverse in the sense of \eqref{definverse}.


\subsection{\sc \texorpdfstring{$\Phi$}{Phi}-functions
and generalized Orlicz spaces}\label{subsec:PhiOrlicz}

We start this subsection by introducing the notion of
almost increasing and almost decreasing functions, after
which we recall the definition of  \(\Phi\)-functions and of
some of its  generalizations.

\begin{definition}
We say that a function \(g:D\subset\R \to [0,\infty]\)
is \emph{almost increasing} if \(g(t_1) \leq c\, g(t_2)\) for every
\(t_1\leq t_2\) in $D$ and some $c$.
We say that \(c\) is the \emph{monotonicity constant}
of \(g\). \emph{Almost decreasing} is defined analogously.
\end{definition}

\begin{definition}
Let  \(\phi:[0,\infty) \to [0,\infty]\)
be an increasing function satisfying
\(\phi (0)= \lim_{t\to0^+} \phi(t)=0\)
and \( \lim_{t\to\infty} \phi (t) = \infty\). We say that \(\phi\) is a
\begin{enumerate}
\item[(i)]
\emph{weak \(\Phi\)-function}
if  \(t\mapsto\frac{\phi(t)}{t} \) is almost increasing.
\item[(ii)]   \emph{\(\Phi\)-function}
if it is left-continuous and convex.
\end{enumerate}
We denote  by \(\Phi_w\)   the set of all weak \(\Phi\)-functions
and by \(\Phi\) the set of all  \(\Phi\)-functions.
\end{definition}

\begin{remark}\label{onPhis}
If $\phi\in \Phi$, then, by convexity and because \(\phi(0)=0\), for \(0<t_1<t_2\), $\phi(t_1)= \phi(\tfrac{t_1}{t_2}t_2+
(1-\tfrac{t_1}{t_2})0) \le \tfrac{t_1}{t_2}  \phi(t_2)$;
 thus, $t\mapsto \frac{\phi(t)}t$ is increasing. Hence, \(\Phi\subset \Phi_w\).
Conversely, if $\phi\in \Phi_w$, then there exists $\psi\in \Phi$ such
that $\phi\simeq \psi$ \cite[Proposition~2.3]{HarHK_pp16}.
\end{remark}

\begin{definition}
Let \(\Omega\subset \RR^n\) be an open set, and let \(\phi: \Omega \times
[0,\infty) \to [0,\infty]\) be a function such that   \(\phi(\cdot,t)\in
L^0(\Omega) \) for every \(t\in [0,\infty)\).
We say that \(\phi\) is a
\begin{itemize}
\item[(i)]
\emph{generalized weak \(\Phi\)-function}
on \(\Omega\) if \(\phi(x,\cdot)\in
\Phi_w\) uniformly in \(x\in\Omega\);
i.e.,\ the monotonicity constant is independent of \(x\).
\item[(ii)]
\emph{generalized  \(\Phi\)-function}
on \(\Omega\) if \(\phi(x,\cdot)\in
\Phi\) for  every \(x\in\Omega\).
\end{itemize}
We denote  by \(\Phi_w(\Omega)\) and \(\Phi(\Omega)\) the sets of
generalized weak \(\Phi\)-functions and generalized
 \(\Phi\)-functions, respectively.
\end{definition}

By this definition, it is clear that properties of (weak) $\Phi$-functions
carry over to generalized (weak) $\Phi$-functions point-wise uniformly. In particular,
this holds for Remark~\ref{onPhis}. Similarly, $\phi\simeq \psi$ means that
$\phi(x,\cdot)\simeq \psi(x,\cdot)$ with constant uniform in $x$, etc.

Next, we  recall the definition of the generalized Orlicz space, quasi-norm,
and quasi-semimodular associated with a generalized weak
 \(\Phi\)-function.

\begin{definition}\label{defOS}
Let \(\Omega\subset \RR^n\) be an open set,  let
\(\phi \in \Phi_w(\Omega)\), and   consider the \emph{weak quasi-semimodular},
\(\varrho_{\phix,\Omega}\), on \(L^0(\Omega)\) defined
by
\begin{equation*}
\varrho_{\phix,\Omega}(f):= \int_\Omega \phi (
x, |f(x)|) \,dx
\end{equation*}
for all \(f\in L^0(\Omega)\). The \emph{generalized Orlicz
space}, \(L^\phix(\Omega)\),  is given by
\begin{equation*}
L^\phix(\Omega):= \left\{f\in L^0(\Omega)|\, \varrho_{\phix,\Omega}(\lambda
f) < \infty \hbox{ for some } \lambda>0 \right\}.
\end{equation*}
We endow \(L^\phix(\Omega)\) with the quasi-norm
\begin{equation*}
\Vert f \Vert_{\phix,\Omega} := \inf \left\{ \lambda>0|
\, \varrho_{\phix,\Omega}\left(f/\lambda\right)
\leq 1 \right\}\!.
\end{equation*}
\end{definition}

If, in the Definition~\ref{defOS}, \(\phi\in \Phi(\Omega)\) , then \(\varrho_{\phix,\Omega}(\cdot)\)
defines a semimodular on \(L^0(\Omega)\)  and \(\Vert \cdot \Vert_{\phix,\Omega}\) a norm on \(L^{\phi(\cdot)}(\Omega)\)
(see \cite{DieHHR11}).

\begin{remark}\label{casePhi}
If \(\phi,\psi \in \Phi_w(\Omega)\) and  \(\phi\simeq\psi\),
then $L^\phix=L^\psix$ with equivalent quasi-norms.
\end{remark}


\subsection{\sc Main assumptions}\label{subsec:mainass}

We begin by introducing our main assumptions
on the generalized (weak) \(\Phi\)-functions on \(\Omega\).
The first three assumptions, \eqref{doubling}, \eqref{aInc}, and \eqref{aDec},  extend three known properties for $\Phi$-functions
 to generalized $\Phi$-functions point-wise
uniformly. The fourth assumption,
\eqref{A}, relates the behavior of generalized  \(\Phi\)-functions
at different
values of the variable in $\Omega$.
The last assumption, \eqref{loc}, implies that 
simple functions belong to the generalized Orlicz space.
These last two assumptions hold trivially for Orlicz spaces.

Let \(\Omega\subset \RR^n\) be open and \(\phi \in \Phi_w(\Omega)\). We denote by $\phi^{-1}$ the generalized inverse of $\phi$ with respect
to the
second variable (see \eqref{definverse}).
\begin{hypothesesD}{$\Delta_2$}
\item \label{doubling}
$\phi$ is \emph{doubling}; i.e.,\
there exists \(A>0\) such that \(\phi(x,2t) \leq A\phi(x,t)\)
for \aev\ \(x\in\Omega\) and for all \(t\geq 0\).
\end{hypothesesD}
\begin{hypothesesD}{aInc}
 \item \label{aInc}
There exists a constant \(\phiu>1\) such that for \aev\ \(x\in\Omega\),  the map \(s\mapsto
 s^{-\phiu}\phi(x,s) \) is almost increasing with monotonicity constant
\(\cu\) independent of \(x\).
\end{hypothesesD}
\begin{hypothesesD}{aDec}
\item \label{aDec}
There exists a constant \(\phid>1\) such that for \aev\ \(x\in\Omega\),  the map \(s\mapsto
 s^{-\phid}\phi(x,s) \) is almost decreasing with monotonicity constant
\(\cd\) independent of \(x\).
\end{hypothesesD}

\begin{hypothesesD}{A}
\item
\label{A}
There exist \(\beta,\sigma>0\) for which:
\end{hypothesesD}

\begin{hypotheses}{A}\setcounter{enumi}{-1}
\item \label{A0}
\(\phi(x,\beta \sigma) \le 1 \le \phi(x,\sigma)\) for all\ \(x\in\Omega\);
\item \label{A1}
\(\phi(x,\beta t) \le \phi(y,t)\)
for every ball \(B\subset \Omega\),  \(x,\, y\in B\), and
 \(t\in \big [\sigma, \phi^{-1}\big(y,\frac{1}{|B|}\big)
\big]\);
\item \label{A2}
there exists \(h\in
L^1(\Omega) \cap L^\infty(\Omega)\) such that
for  \aev\ \(x, \, y\in\Omega\) and for all \(t\in
[0,\sigma]\), we have \(\phi(x, \beta t) \le \phi(y,
t) + h(x) + h(y).\)
\end{hypotheses}

\begin{hypothesesD}{loc}
 \item \label{loc} There exists \(t_0>0\) such that
   \( \phi(\cdot,t_0) \in L^1_{\rm loc} (\Omega) \).
\end{hypothesesD}

The notation \eqref{aInc}$_1$ is used for a version of
\eqref{aInc} with \(\phiu\ge 1\); i.e.,\ equality included.
Note that, for any weak $\Phi$-function,
\eqref{aInc}$_1$  holds  for $\phiu=1$.

\begin{remark}\label{Remark:assumptions}
Let us collect several observations regarding the assumptions above.
\begin{enumerate}
\item
Each of the previous conditions is invariant under equivalence
of (weak) $\Phi$-functions; i.e.,\ if $\phi\simeq\psi$, then $\phi$ satisfies a
condition if and only if $\psi$ satisfies it.
\item
For doubling (weak) $\Phi$-functions, \(\simeq\) and \(\approx\) are equivalent.
\item
\eqref{aDec} and \eqref{doubling} are equivalent \cite[Lemma~2.6]{HarHT_pp16}.
\item
\eqref{A0} implies \eqref{loc} (choose $t_0:=\beta \sigma$).
\item
If \eqref{aInc} and \eqref{aDec} hold, then $\phi^\uparrow \le \phi^\downarrow$.
\item
Finally, note that if $\phi$  satisfies \eqref{aDec}
and \eqref{loc},
then \( \phi(\cdot,t) \in L^1_{\rm loc} (\Omega)\) for
every \(t\geq0\).
\end{enumerate}
\end{remark}

It follows directly from the definition of the left-inverse that
$\phi\simeq \psi$ implies $\phi^{-1}\approx \psi^{-1}$.
Furthermore, if $\phi\simeq \psi$, then
$\phi_B^-\simeq \psi_B^-$ and so  $(\phi_B^-)^{-1}\approx (\psi_B^-)^{-1}$.

The next proposition shows that  Assumption
\eqref{A1} is equivalent to its counterpart in
\cite{Has15}.

\begin{lemma}\label{onA1s}
Let \(\phi\in \Phi_w(\Omega)\), \(\sigma>0\), and \(\beta\in
(0,1)\). Then, \(\phi\) satisfies \eqref{A1}  for \((\sigma,\beta)\)
if and only if for every ball \(B\subset \Omega\), and for all finite
\(t\in [\sigma, (\phi^-_B)^{-1}(1/|B|)]\), we have
\begin{equation}
\label{A1MO}
\phi^+_B(\beta t) \leq \phi^-_B(t).
\end{equation}
\end{lemma}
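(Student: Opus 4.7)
The plan is to reduce both directions to a direct comparison between the pointwise generalized inverse \(\phi^{-1}(y,\cdot)\) and \((\phi^-_B)^{-1}\).

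First I would record the following observation. Since \(\phi^-_B(s)\le \phi(y,s)\) pointwise for every \(y\in B\cap\Omega\), taking left-continuous generalized inverses reverses the order, giving \((\phi^-_B)^{-1}(s)\ge \phi^{-1}(y,s)\). Unwinding the infimum in the definition of \((\phi^-_B)^{-1}\) in fact yields
\[
(\phi^-_B)^{-1}\bigl(\tfrac{1}{|B|}\bigr)=\sup_{y\in B\cap\Omega}\phi^{-1}\bigl(y,\tfrac{1}{|B|}\bigr),
\]
since any \(s\) strictly above this supremum satisfies \(\phi(y,s)\ge \tfrac{1}{|B|}\) for every \(y\), hence \(\phi^-_B(s)\ge\tfrac{1}{|B|}\).

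For the direction \eqref{A1MO}\(\Rightarrow\)\eqref{A1}, given \(x,y\in B\) and \(t\in\bigl[\sigma,\phi^{-1}(y,\tfrac{1}{|B|})\bigr]\), the preliminary inequality places \(t\) also in \(\bigl[\sigma,(\phi^-_B)^{-1}(\tfrac{1}{|B|})\bigr]\), so \eqref{A1MO} gives \(\phi(x,\beta t)\le \phi^+_B(\beta t)\le \phi^-_B(t)\le \phi(y,t)\), which is the content of \eqref{A1}. The reverse direction is the substantive step. Fix \(B\), a finite \(t\in\bigl[\sigma,(\phi^-_B)^{-1}(\tfrac{1}{|B|})\bigr]\), and \(x,y\in B\); the aim is \(\phi(x,\beta t)\le \phi(y,t)\). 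If \(t\le \phi^{-1}(y,\tfrac{1}{|B|})\) the bound is \eqref{A1} directly. Otherwise \(\phi(y,t)\ge \tfrac{1}{|B|}\), and it suffices to prove \(\phi(x,\beta t)\le \tfrac{1}{|B|}\): for every \(t'<t\) the preliminary identity furnishes some \(y^{\ast}\in B\) with \(\phi^{-1}(y^{\ast},\tfrac{1}{|B|})>t'\), and \eqref{A1} applied at \((x,y^{\ast},t')\) yields \(\phi(x,\beta t')\le \phi(y^{\ast},t')<\tfrac{1}{|B|}\).

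The main obstacle, and the only step that is not entirely formal, is the passage \(t'\to t^-\) needed when \(t\) itself equals the defining supremum \((\phi^-_B)^{-1}(\tfrac{1}{|B|})\) and this supremum is not attained. This step requires left-continuity of \(\phi(x,\cdot)\), which is not part of the definition of a weak \(\Phi\)-function. I would handle it by first replacing \(\phi\) with an equivalent \(\Phi\)-function \(\psi\simeq\phi\) supplied by Remark~\ref{onPhis} (which is left-continuous and convex), proving the claim for \(\psi\) exactly as above, and transferring back via the invariance of \eqref{A1} and of \eqref{A1MO} under the equivalence \(\simeq\), noted in Remark~\ref{Remark:assumptions}(1). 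With this limiting issue resolved, the two cases combine to give \(\phi(x,\beta t)\le \phi(y,t)\) for all \(x,y\in B\), and taking the supremum in \(x\) and infimum in \(y\) produces \eqref{A1MO}.
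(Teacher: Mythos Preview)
Your proof of the easy direction \eqref{A1MO}$\Rightarrow$\eqref{A1} matches the paper's. For the converse, the paper takes a slightly different route: rather than fixing $y$ and splitting according to whether $t\le\phi^{-1}(y,\tfrac1{|B|})$, it chooses a sequence $(y_i)\subset B$ with $\phi(y_i,t)\to\phi^-_B(t)$ and observes that, for $t$ strictly below the endpoint, one automatically has $t\le\phi^{-1}(y_i,\tfrac1{|B|})$ for large $i$, so \eqref{A1} applies with $y=y_i$ and one lets $i\to\infty$. The endpoint $t=(\phi^-_B)^{-1}(\tfrac1{|B|})$ is then handled, in both proofs, by passing $t'\to t^-$ together with the left-continuity of $\phi(x,\cdot)$. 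Your preliminary identity $(\phi^-_B)^{-1}=\sup_{y}\phi^{-1}(y,\cdot)$ is exactly the content of the paper's subsequent Lemma~\ref{Lem:auxformain}; the paper's minimizing-sequence argument does not need it here.

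There is, however, a genuine gap in your proposed fix for the left-continuity step. The lemma is stated for a \emph{fixed} pair $(\sigma,\beta)$: it asserts that \eqref{A1} and \eqref{A1MO} are equivalent with the \emph{same} constants. Passing to an equivalent $\psi\simeq\phi$ via Remark~\ref{onPhis} preserves each condition only up to a change of constants (this is all that Remark~\ref{Remark:assumptions}(1) asserts), so the round trip $\phi\to\psi\to\phi$ yields \eqref{A1MO} for $\phi$ with some $(\sigma'',\beta'')$, not with the original $(\sigma,\beta)$. Hence the transfer argument does not deliver the precise equivalence claimed. For what it is worth, the paper's own proof simply invokes ``the left-continuity of $\phi(x,\cdot)$'' at this step without explaining how it is secured for a general $\phi\in\Phi_w(\Omega)$, so the issue you identified is real; only your patch does not close it as stated.
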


\begin{proof}
Because \((\phi^-_B)^{-1}\big(\frac{1}{|B|}\big) \geq\phi^{-1}\big(y,\frac{1}{|B|}\big) \), it follows that if \(\phi\) satisfies \eqref{A1MO},
then it also satisfies \eqref{A1}.

Conversely, assume that \(\phi\) satisfies \eqref{A1}.
We first consider the case when \(t\in \big[\sigma, (\phi^-_B)^{-1}
\big(\frac{1}{|B|}\big)\big)\).
In this case, we can find \((y_i)_{i\in\NN}\subset  B\) such that  \(t\in \big[\sigma, \phi^{-1}\big(y_i,\frac{1}{|B|}\big)\big]\)
for all \(i\in\NN\) and
\(\phi^-_B(t) = \lim_{i\to\infty} \phi(y_i,t)\). Then,
by \eqref{A1},  we have
\begin{equation*}
\begin{aligned}
\phi(x,\beta t) \leq \phi(y_i,t)
\end{aligned}
\end{equation*}
for  \aev\ \(x\in B\) and for all \(i\in\NN\). Taking
the supremum over \(x\in B\) and then letting \(i\to
\infty\) in the previous estimate, we obtain \(\phi^+_B(\beta t) \leq \phi^-_B(t)\). Finally, assume that \(t= (\phi^-_B)^{-1}
\big(\frac{1}{|B|}\big)<\infty\), and let \(t'\in [\sigma, t)\).
By the previous case,
\begin{equation}\label{AiMO1}
\begin{aligned}
\phi(x,\beta t') \leq \phi^+_B(\beta
t') \leq \phi^-_B(t') \leq  \phi^-_B(t)
\end{aligned}
\end{equation}
for \aev\ \(x\in B\), where in the last inequality we used
the fact that \(\phi^-_B\) is increasing. Taking the
limit \(t'\to t\) in \eqref{AiMO1}, the left-continuity
of \(\phi(x,\cdot)\) yields  \(\phi(x,\beta t) \leq \phi^-_B(t)\)
for \aev\ \(x\in B\). Hence, taking
the supremum over \(x\in B\), we conclude that \(\phi
\) satisfies \eqref{A1MO}.
\end{proof}


The next lemma shows that left-inverse commutes with infimum, even
when the function is not continuous.

\begin{lemma}\label{Lem:auxformain}
Let \(\Omega\subset \RR^n\) be an open
set,   let \(B\subset \Omega\), and
let $\phi\in \Phi_w(\Omega)$.
Then, \((\phi^-_B)^{-1} = (\phi^{-1})_B^+\).
\end{lemma}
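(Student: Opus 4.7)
The plan is to verify the equality pointwise, showing the two inequalities $(\phi^{-1})^+_B(s) \le (\phi^-_B)^{-1}(s)$ and $(\phi^-_B)^{-1}(s) \le (\phi^{-1})^+_B(s)$ for every $s\in[0,\infty]$, directly from the definition \eqref{definverse} of the left-continuous generalized inverse and from the monotonicity of $\phi(x,\cdot)$. No convexity or left-continuity is needed, so the argument works for arbitrary $\phi\in\Phi_w(\Omega)$.

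For the first inequality, I would observe that $\phi^-_B(t)\le \phi(x,t)$ for every $x\in \Omega\cap B$ and every $t\ge 0$, and therefore
\[
\{t\ge 0:\phi^-_B(t)\ge s\}\subset \{t\ge 0:\phi(x,t)\ge s\}.
\]
Taking infima gives $\phi^{-1}(x,s)\le (\phi^-_B)^{-1}(s)$ for each $x\in\Omega\cap B$, and then taking the supremum over $x$ yields $(\phi^{-1})^+_B(s)\le (\phi^-_B)^{-1}(s)$.

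For the reverse inequality, I would set $T:=(\phi^{-1})^+_B(s)=\sup_{x\in\Omega\cap B}\phi^{-1}(x,s)$ and fix an arbitrary $\epsi>0$. For every $x\in\Omega\cap B$, the inequality $\phi^{-1}(x,s)\le T<T+\epsi$ means, by the definition of the infimum in \eqref{definverse}, that there exists some $t_x<T+\epsi$ with $\phi(x,t_x)\ge s$; monotonicity of $\phi(x,\cdot)$ then gives $\phi(x,T+\epsi)\ge s$. Taking the infimum over $x\in\Omega\cap B$ yields $\phi^-_B(T+\epsi)\ge s$, so $T+\epsi\in\{t\ge 0:\phi^-_B(t)\ge s\}$, and therefore $(\phi^-_B)^{-1}(s)\le T+\epsi$. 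Letting $\epsi\to 0^+$ completes the proof.

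The only subtle point is the reverse direction: one cannot simply evaluate $\phi(x,T)$, because the generalized inverse $\phi^{-1}(x,\cdot)$ of a merely increasing function does not necessarily satisfy $\phi(x,\phi^{-1}(x,s))\ge s$ (equality fails at jump points). Inserting the buffer $\epsi$ and passing to the limit at the end, rather than exchanging the infimum over $x$ with the infimum defining $(\phi^-_B)^{-1}$, is the trick that bypasses this obstacle.
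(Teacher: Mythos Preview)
Your proof is correct and follows essentially the same route as the paper's: both directions are argued exactly as you do, with the same $\epsi$-buffer trick for the reverse inequality. The only cosmetic difference is that the paper explicitly disposes of the case $T=(\phi^{-1})^+_B(s)=\infty$ before introducing $\epsi$, whereas you leave that (trivial) case implicit.
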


\begin{proof}
Fix \(s\in[0,\infty]\). For every \(x\in
B\), we have
\[
(\phi_B^-)^{-1}(s)
 =
\inf\big\{t\geq0\,|\, \phi_B^-(t)\ge s\big\}
 \geq
\inf\big\{t\geq0\,|\,  \phi(x,t)\ge s\big\}=\phi^{-1}(x,s).
\]
 Thus, taking the supremum over \(x\in
B\),
\[
(\phi_B^-)^{-1}(s)
 \geq
(\phi^{-1})_B^+(s) .
\]

To prove the converse inequality, we
may assume that \(\bar t: = (\phi^{-1})_B^+(s)
< \infty\) without loss of generality.
Fix \(\epsi>0\). By definition of \(\bar
t\) and because \(\phi(x,\cdot)\)
is increasing for every \(x\in B\), we
have \(\phi(x,\bar t + \epsi) \geq s\)
for all \(x\in B\). Hence, \((\phi_B^-)^{-1}(s)
\leq \bar t + \epsi\). Letting \(\epsi\to0\),
we conclude the desired inequality.
\end{proof}

The following lemma allows us to relate Assumption~\eqref{A1} with its counterpart in
\cite{HarH_pp15}.

\begin{lemma}\label{lem:As}
Let \(\Omega\subset \RR^n\) be  open
 and assume that $\phi \in \Phi(\Omega)$ is doubling and satisfies \eqref{A0}.
Then, \eqref{A1} is equivalent to the following condition: 
%
\begin{itemize}
\item[(A1')] \label{A1'}
$\phi^{-1}(x,s) \lesssim \phi^{-1}(y,s)$ for  every ball
\(B\subset\Omega\),   $x,\,y\in B$,
and  $s\in [1,\frac1{|B|}]$.
\end{itemize}
%
\end{lemma}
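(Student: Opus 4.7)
The plan is to translate between the two conditions via the left-continuous generalized inverse, relying on two basic properties: (i) if $\phi(x,\tau)<s$ then $\phi^{-1}(x,s)\ge\tau$, by monotonicity; and (ii) if $t>\phi^{-1}(x,s)$ then $\phi(x,t)\ge s$, by definition of the infimum.

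For \eqref{A1}$\Rightarrow$(A1'), I would fix a ball $B\subset\Omega$, points $x,y\in B$, and $s\in[1,1/|B|]$. Applying \eqref{A1} with the roles of $x$ and $y$ swapped gives $\phi(y,\beta\tau)\le\phi(x,\tau)$ for every $\tau\in[\sigma,\phi^{-1}(x,1/|B|)]$. In the main case $\phi^{-1}(x,s)\ge\sigma$, the admissible $\tau$ with $\phi(x,\tau)<s$ have supremum exactly $\phi^{-1}(x,s)$ (using left-continuity of $\phi(x,\cdot)$ together with $s\le 1/|B|$, which keeps $\tau$ below $\phi^{-1}(x,1/|B|)$); passing to this supremum in $\phi^{-1}(y,s)\ge\beta\tau$ yields $\phi^{-1}(y,s)\ge\beta\phi^{-1}(x,s)$. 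In the complementary case $\phi^{-1}(x,s)<\sigma$, \eqref{A0} forces $\phi^{-1}(y,s)\ge\beta\sigma>\phi^{-1}(x,s)$, at the price of slightly shrinking $\beta$ to absorb the borderline $s=1$. Swapping $x$ and $y$ gives the reverse inequality, so (A1') follows with constant $1/\beta$.

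For (A1')$\Rightarrow$\eqref{A1}, I would fix $x,y\in B$ and $t\in[\sigma,\phi^{-1}(y,1/|B|)]$ and set $s:=\phi(y,t)$. Combining \eqref{A0} with the identities $\phi(y,\phi^{-1}(y,1/|B|))\le 1/|B|$ (left-continuity of $\phi(y,\cdot)$) and $\phi^{-1}(y,\phi(y,t))\le t$ shows that $s\in[1,1/|B|]$. Then (A1') yields $\phi^{-1}(x,s)\le C\phi^{-1}(y,s)\le Ct$, and property (ii) above then gives $\phi(x,C't)\ge s=\phi(y,t)$ for every fixed $C'>C$. Relabeling the pair $(x,y)$ (permitted by the universal quantification in (A1')) and substituting $t\mapsto t/C'$ converts this into $\phi(x,\beta t)\le\phi(y,t)$ with $\beta=1/C'$; the range shift from $[\sigma,\phi^{-1}(y,1/|B|)]$ to $[C'\sigma,C'\phi^{-1}(x,1/|B|)]$ is absorbed using doubling and \eqref{A0} to cover the short interval $[\sigma,C'\sigma]$.

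The main obstacle throughout is the boundary behavior of the generalized inverse---jumps and flat parts of $\phi(x,\cdot)$ and the associated distinction between strict and non-strict inequalities---and the bookkeeping of the admissible ranges after the changes of variable. These are handled by systematically invoking left-continuity of $\phi(x,\cdot)$, the anchoring of $\phi^{-1}$ near $\sigma$ provided by \eqref{A0} for $s$ near $1$, and the doubling assumption, which absorbs all small perturbations of the multiplicative constants $\beta$ and $C$.
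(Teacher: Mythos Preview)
Your overall strategy---translate between \eqref{A1} and (A1') via the inverse, and use \eqref{A0} to cover the short range near $s=1$ (equivalently, near $t=\sigma$)---is exactly the paper's. What you have missed is the decisive simplification: since $\phi\in\Phi(\Omega)$ is doubling, $\phi(x,\cdot)$ is a \emph{bijection} from $[0,\infty)$ onto $[0,\infty)$. Indeed, doubling forces $\phi(x,t)>0$ for every $t>0$ (otherwise $\phi(x,2^kt)=0$ for all $k$, contradicting $\phi(x,t)\to\infty$); then the monotonicity of $t\mapsto\phi(x,t)/t$ rules out flat parts, and convexity together with finiteness gives continuity. Once $\phi^{-1}(x,\cdot)$ is a genuine inverse, all your worries about ``jumps and flat parts'' and strict versus non-strict inequalities evaporate: \eqref{A1} becomes, via the substitution $s=\phi(y,t)$, exactly $\beta\phi^{-1}(x,s)\le\phi^{-1}(y,s)$ for $s\in[\phi(x,\sigma),1/|B|]$, and \eqref{A0} then handles $s\in[1,\phi(x,\sigma)]$ in one line. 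No relabeling, no resubstitution, no $C'>C$.

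Your argument survives without this observation, but there is a slip in the complementary case of \eqref{A1}$\Rightarrow$(A1'): the chain $\phi^{-1}(y,s)\ge\beta\sigma>\phi^{-1}(x,s)$ is false, since \eqref{A0} gives $\phi^{-1}(x,s)\ge\beta\sigma$ as well. What you actually obtain is $\phi^{-1}(y,s)\ge\beta\sigma>\beta\,\phi^{-1}(x,s)$ from $\phi^{-1}(x,s)<\sigma$, which suffices. For (A1')$\Rightarrow$\eqref{A1}, your detour through $\phi(x,C't)\ge\phi(y,t)$ followed by relabeling and the substitution $t\mapsto t/C'$ is correct but unnecessarily convoluted; with the bijection one applies (A1') in the form $\phi^{-1}(y,s)\le C\phi^{-1}(x,s)$ directly to $s=\phi(y,t)$ and reads off $\phi(x,t/C)\le\phi(y,t)$.
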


\begin{proof}
%
Because $\phi$ belongs to $\Phi(\Omega)$ and is doubling,
it is a bijection with respect to the second variable from $[0,\infty)$ to $[0,\infty)$.
Applying $\phi^{-1}$ to \eqref{A0} and \eqref{A1}, we find that
\begin{equation}\label{equiv A0}
\eqref{A0} \quad \Leftrightarrow \quad \beta\sigma \le \phi^{-1}(x,1) \le \sigma\
\text{for}\ \aev\ x\in\Omega;
\end{equation}
\begin{equation*}
\eqref{A1} \quad \Leftrightarrow \quad \beta \phi^{-1}(x,s) \le \phi^{-1}(y,s)\ \text{for}\ \aev\ x,\,y \in B\ \text{and for all}\ s\in \Big[\phi(x,\sigma),\tfrac1{|B|}\Big].
\end{equation*}
If $s\in [1,\phi(x,\sigma)]$, then \(\beta\phi^{
-1}(x,1)
\leq \beta\phi^{-1}(x,s)\leq\beta \sigma\) because \(\phi^{-1}(x,\cdot)\)
is increasing. Thus, using \eqref{equiv A0}, $\beta \phi^{-1}(x,s) \le \phi^{-1}(y,s)$
holds for all such \(s\).
%
%
%
\end{proof}

As mentioned at the beginning of Section~\ref{sec:prelim},
the following lemma can be interpreted as  a counterpart in
our setting of the weighted power-mean inequality for
the function \(\varphi(x, t)
= t^p\) for some \(p\geq 1\).

\begin{lemma}\label{lem:wpm}
Let \(\Omega\subset \RR^n\) be open and assume that
\(\phi \in \Phi_w(\Omega)\) satisfies \eqref{aDec}.
Then, for all $\delta>0$ and \(a, b \geq 0\) and for \aev\ \(x\in\Omega \), we have
\begin{equation}
\label{wpmiPhi2}
\phi(x,a+b)\le \phi(x, (1+\delta)a)+ \frac{1}{\cd} \left(1+\frac{1}{\delta}\right)^{\phid}\phi(x,b)
\end{equation}
 and
\begin{equation}
\label{wpmiPhi}
\phi(x,a+b)\le \frac{1}{\cd} \left[(1+\delta)^{\phid}\,\phi(x,a)+
\left(1+\frac{1}{\delta}\right)^{\phid}
\phi(x,b) \right].
\end{equation}
\end{lemma}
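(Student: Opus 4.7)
The plan is to combine a simple case split on the relative sizes of $a$ and $b$ with a single power-type scaling consequence of \eqref{aDec}. Because $s\mapsto s^{-\phid}\phi(x,s)$ is almost decreasing with monotonicity constant $\cd$, choosing $t_1=t$ and $t_2=\lambda t$ (with $\lambda\geq 1$) in the almost-decreasing inequality and rearranging immediately yields the scaling bound
\[
\phi(x,\lambda t)\leq \frac{\lambda^{\phid}}{\cd}\,\phi(x,t)\qquad \text{for every } \lambda\geq 1 \text{ and } t\geq 0,
\]
which is the only place \eqref{aDec} is used in the argument.

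To establish \eqref{wpmiPhi2}, I would distinguish two cases. If $b\leq \delta a$, then $a+b\leq (1+\delta)a$, so the monotonicity of $\phi(x,\cdot)$ alone gives $\phi(x,a+b)\leq \phi(x,(1+\delta)a)$, and \eqref{wpmiPhi2} follows by adding the non-negative term $\tfrac{1}{\cd}(1+\tfrac{1}{\delta})^{\phid}\phi(x,b)$ on the right. If instead $b>\delta a$, then $a<b/\delta$, hence $a+b<(1+\tfrac{1}{\delta})b$; applying monotonicity and then the scaling bound above with $\lambda=1+\tfrac{1}{\delta}$ and $t=b$ yields
\[
\phi(x,a+b)\leq \phi\!\left(x,\left(1+\tfrac{1}{\delta}\right)b\right)\leq \frac{1}{\cd}\left(1+\tfrac{1}{\delta}\right)^{\phid}\phi(x,b),
\]
and \eqref{wpmiPhi2} in this case follows by adding the non-negative term $\phi(x,(1+\delta)a)$ on the right.

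For \eqref{wpmiPhi} I would simply apply the scaling bound once more, now to the first term on the right-hand side of \eqref{wpmiPhi2}, with $\lambda=1+\delta$ and $t=a$, obtaining $\phi(x,(1+\delta)a)\leq \tfrac{(1+\delta)^{\phid}}{\cd}\phi(x,a)$, and substitute. The argument presents no real obstacle; the only point requiring mild care is the bookkeeping of $\cd$, which appears as a reciprocal in the scaling bound because the ``larger'' value $\lambda t$ sits on the left of the almost-decreasing inequality for $s\mapsto s^{-\phid}\phi(x,s)$.
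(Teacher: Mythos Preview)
Your proposal is correct and follows essentially the same approach as the paper's own proof: the same case split $b\le\delta a$ versus $b>\delta a$, the same use of monotonicity in the first case and of the scaling bound from \eqref{aDec} in the second, and the same deduction of \eqref{wpmiPhi} from \eqref{wpmiPhi2} by one further application of \eqref{aDec}. Your write-up is slightly more explicit about the scaling inequality and the bookkeeping of $\cd$, but there is no substantive difference.
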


\begin{proof}
If \(b \leq \delta a\), then the monotonicity of \(\phi(x,\cdot)\)
yields
\begin{equation*}
\begin{aligned}
\phi(x,a+b) \leq \phi(x,(1+\delta)a).
\end{aligned}
\end{equation*}
If \(a<\delta^{-1} b\), then the monotonicity of \(\phi(x,\cdot)\)
and \eqref{aDec} yield
\begin{equation*}
\begin{aligned}
\phi(x,a+b) \leq \phi(x,(1+\delta^{-1})b) \leq \frac{1}{\cd}
(1+\delta^{-1})^\phid\phi(x,b).
\end{aligned}
\end{equation*}
Thus, \eqref{wpmiPhi2} holds. Further, \eqref{wpmiPhi} follows from
\eqref{wpmiPhi2} by \eqref{aDec}.
\end{proof}

The following lemma is a variant of \cite[Lemma~4.4]{Has15} without
the assumption $\rho_{\phix}(f\chi_{\{|f|> \sigma\}})< 1$
and correspondingly weaker conclusion (see also \cite[Lemma~2.2]{HasR_pp15}).

\begin{lemma}\label{lem:key-estimate}
Let \(\Omega\subset \RR^n\) be open and assume that $\phi\in \Phi_w(\Omega)$ satisfies
Assumption~\eqref{A}.
Then, there exists $\beta'>0$ such that, for every ball $B\subset\Omega$,
$f\in L^{\phix}(\Omega)$, and \aev\ \(x\in B\), we have
\begin{equation*}
\phi\bigg(x,  \beta'  \min\bigg\{(\phi_B^-)^{-1}\Big(\frac1{|B|}\Big),
\fint_B |f(y)|\, dy\bigg\}\bigg)
\le
\fint_B \phi(y,|f(y)|) \, dy + h(x)  + \fint_B  h(y) \, dy,
\end{equation*}
where $h$ is the function provided by \eqref{A2}.
\end{lemma}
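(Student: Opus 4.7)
I would set $R:=(\phi_B^-)^{-1}(1/|B|)$, $f_B:=\fint_B|f(y)|\,dy$, and $T:=\min\{R,f_B\}$, so the goal is $\phi(x,\beta' T)\le\fint_B\phi(y,|f(y)|)\,dy+h(x)+\fint_B h(y)\,dy$ for some fixed $\beta'>0$ and \aev\ $x\in B$. First I would replace $\phi$ by an equivalent member of $\Phi(\Omega)$ via Remark~\ref{onPhis}, absorbing the equivalence constant into $\beta'$; this gives convexity of each $\phi(y,\cdot)$ so that Jensen's inequality is available in its classical form. The key tool is a unified ``swap'' estimate that consolidates Assumption~\eqref{A}: for every $t\in[0,R]$ and \aev\ $x,y\in B$,
$$\phi(x,\beta t)\le \phi(y,t)+h(x)+h(y),\qquad(\text{swap})$$
since for $t\in[0,\sigma]$ this is precisely \eqref{A2}, while for $t\in[\sigma,R]$ Lemma~\ref{onA1s} yields $\phi(x,\beta t)\le\phi_B^+(\beta t)\le\phi_B^-(t)\le\phi(y,t)$, and the nonnegative $h$-terms come for free.

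Setting $\beta':=\beta^2$ (to be shrunk further below), I apply (swap) with $t=\beta T\le R$ to get $\phi(x,\beta^2 T)\le \phi(y,\beta T)+h(x)+h(y)$; averaging in $y\in B$ gives
$$\phi(x,\beta^2 T)\le \fint_B \phi(y,\beta T)\,dy+h(x)+\fint_B h(y)\,dy.$$
Since $T\le f_B$ and $\phi(y,\cdot)$ is convex, Jensen's inequality yields $\phi(y,\beta T)\le \phi(y,\beta f_B)\le \fint_B\phi(y,\beta|f(z)|)\,dz$. Substituting and swapping the order of integration reduces the problem to controlling, for each fixed $z\in B$, the inner average $\fint_B\phi(y,\beta|f(z)|)\,dy$. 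On $\{z\in B:|f(z)|\le R\}$, (swap) applies a second time with $t=|f(z)|$ and the roles of $x,y$ exchanged, giving $\phi(y,\beta|f(z)|)\le \phi(z,|f(z)|)+h(y)+h(z)$; averaging in $y$ contributes exactly $\phi(z,|f(z)|)+h(z)+\fint_B h$, which assembles into the desired right-hand side after the outer integration in $z$.

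The main obstacle is the tail set $\{z\in B:|f(z)|>R\}$, where (swap) fails because \eqref{A1} only covers arguments up to $R$. The essential point is to bound $\phi_B^+(\beta|f(z)|)$ on this tail by a constant multiple of $\phi(z,|f(z)|)$, so that the right-hand side can absorb the contribution. For this I would combine Lemma~\ref{onA1s} at $t=R$ (which yields the boundary estimate $\phi_B^+(\beta R)\le 1/|B|$), together with \eqref{aDec} applied above $R$ to $\phi_B^+$ and \eqref{aInc} applied above $R$ to $\phi_B^-$, using the observation that $\phi(z,|f(z)|)\ge\phi_B^-(|f(z)|)\ge 1/|B|$ throughout this set. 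Any comparison factor produced by these uses of \eqref{aDec}/\eqref{aInc} that is not uniformly bounded (e.g.\ a power of $|f(z)|/R$) can be absorbed into the argument of $\phi$ on the left-hand side by a further shrinking of $\beta'$, invoking \eqref{aDec} on $\phi(x,\cdot)$ one final time to convert the extra power into a smaller argument. Once this absorption is carried out, the outer swap, Jensen, and the two-part inner estimate combine to yield the claim with a single fixed constant $\beta'$ and the stated $h$-terms.
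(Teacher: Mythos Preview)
Your argument has a genuine gap in the handling of the tail set $\{z\in B:|f(z)|>R\}$. First, the lemma assumes only Assumption~\eqref{A}; the conditions \eqref{aInc} and \eqref{aDec} that you invoke for the tail are \emph{not} among the hypotheses, so you are proving a weaker statement than the one claimed. Second, even granting \eqref{aInc} and \eqref{aDec}, the absorption you describe does not work. Combining \eqref{aDec} for $\phi_B^+$ above $\beta R$ with \eqref{aInc} for $\phi_B^-$ above $R$ yields, at best,
\[
\phi(y,\beta|f(z)|)\;\lesssim\;\Big(\tfrac{|f(z)|}{R}\Big)^{\phid-\phiu}\,\phi(z,|f(z)|),
\]
and since $\phid\ge\phiu$ with strict inequality in general, the prefactor is $z$-dependent and unbounded. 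It cannot be ``absorbed into the argument of $\phi$ on the left-hand side by shrinking $\beta'$'': the left-hand side is $\phi(x,\beta' T)$ with $T$ independent of $z$, while the bad factor lives inside the $z$-integral on the right. No single choice of $\beta'$ removes it. (There is also a cosmetic issue: your double application of the swap produces $h(x)+3\fint_B h$ rather than $h(x)+\fint_B h$, but this is harmless compared to the tail problem.)

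The paper avoids the tail entirely by a different decomposition. It splits $f=f_1+f_2$ with $f_1=f\chi_{\{|f|>\sigma\}}$ and $f_2=f-f_1$, and handles the two pieces \emph{before} any swap that would reintroduce large values. For $f_2$ one has $|f_2|\le\sigma$ pointwise, so \eqref{A2} applies directly after a single Jensen step. For $f_1$ the crucial point is that one estimates $\phi\big(x,\tfrac{\beta}{2}\min\{R,\fint_B f_1\}\big)$: the $\min$ keeps the argument in the range where \eqref{A1} (via Lemma~\ref{onA1s}) transfers $\phi(x,\cdot)$ to $\phi_B^-(\cdot)$, and then a Jensen-type inequality for $\phi_B^-$ (\cite[Lemma~4.3]{Has15}) gives $\phi_B^-\big(\tfrac12\fint_B f_1\big)\le\fint_B\phi_B^-(f_1)\le\fint_B\phi(y,f_1(y))\,dy$. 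The case $\tfrac12\min\{R,\fint_B f_1\}\le\sigma$ is dispatched separately using \eqref{A0} and the fact that $t\mapsto\phi(y,t)/t$ is increasing. In short, the paper never needs to compare $\phi(y,t)$ with $\phi(z,t)$ for $t>R$, which is exactly where your approach breaks down.
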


\begin{proof}
Without loss of generality, we may assume
that \(f\geq 0\). By Remarks~\ref{onPhis}
and \ref{Remark:assumptions}, we may also assume that
$\phi\in \Phi(\Omega)$. Then, \(t\mapsto \tfrac{\phi(x,t)}{t}\) is increasing
for every \(x\in\Omega\) .

Fix a ball $B\subset\Omega$, and denote $\alpha:=(\phi_B^-)^{-1}\big(\frac1{|B|}\big)$.
Let \((\sigma,\beta)\) be given by Assumption~\eqref{A}, and set
$f_1:=f\chi_{\{f>\sigma\}}$, $f_2:=f-f_1$,
and $F_i:=\fint_B f_i\, dy$ for \(i\in\{1,2\}\). Because
$\phi(x,\cdot)$ is convex and increasing,
\begin{equation}
\label{keyest1}
\phi\bigg(x, \frac{\beta}{4}\min\bigg\{\alpha, \fint_B f\, dy\bigg\}\bigg)
\le
\phi\big(x, \tfrac{\beta}2 \min\{\alpha, F_1\}\big)
+
\phi\big(x,  \beta F_2\big).
\end{equation}

We start by estimating the first term on the right-hand
side of \eqref{keyest1}.
Suppose first that $\tfrac12\min\{\alpha,  F_1\} > \sigma$. Then, by definition of $\alpha$,
$\tfrac12\min\{\alpha,  F_1\} \in [\sigma,
(\phi_B^-)^{-1}\big(\frac1{|B|}\big)]$. Thus, Lemma~\ref{onA1s}
and the monotonicity of \(\phi(x,\cdot)\) and \(\phi^-_B\) yield
$\phi\big(x, \tfrac {\beta}2 \min\{\alpha,  F_1\}\big)
\le \phi_B^-\big(\tfrac 12 F_1\big)$.
Using now  \cite[Lemma~4.3]{Has15}, we obtain
\begin{equation*}
\phi\big(x, \tfrac{\beta}2 \min\{\alpha,  F_1\}\big)
\le
\phi_B^-\big(\tfrac 12 F_1\big)
\le
\fint_B \phi_B^-(f_1(y))\, dy
\le
\fint_B \phi(y, f_1(y))\, dy.
\end{equation*}

Next, suppose that $\tfrac12\min\{\alpha,  F_1\} \le \sigma$.
By convexity and monotonicity of $\phi(x,\cdot)$, by \eqref{A0}, and by convexity again, we conclude that
\begin{equation*}
\phi\big(x, \tfrac{\beta}2 \min\{\alpha,  F_1\}\big)
\le
\phi\big(x, \beta\sigma\big) \, \frac {F_1}{2\sigma}
\le
\frac {1}{2\sigma} \fint_B f_1 (y)\,dy
\le
\frac {1}{2}\fint_B \phi(y,f_1(y))\, dy,
\end{equation*}
where, in the last inequality, we used also \eqref{A0}
together with the fact that $f_1(y)>\sigma$ in \(\{y\in\Omega\!:\, f_1(y)\not=0\} \).

To estimate the second term on the right-hand
side of \eqref{keyest1}, we invoke the convexity and
monotonicity of $\phi(x,\cdot)$
and Assumption~\eqref{A2} to obtain
\begin{equation*}
\phi\big(x,  \beta F_2\big) \le
\fint_B \phi(x, \beta f_2(y))\, dy
\leq
\fint_B \phi(y,f_2(y))\, dy + h(x) + \fint_B  h(y)\, dy.
\end{equation*}
Recalling that \(\alpha=(\phi_B^-)^{-1}\big(\frac1{|B|}\big)\) and $\phi(x,0)=0$,
the claim follows.
\end{proof}


\section{Auxiliary results}\label{sec:aux}

\begin{lemma}\label{lem:smooth}
Let $\Omega$ and $U$ be two open subsets of $\Rn$ such that $U\subset\subset\Omega$.
Let \(\phi \in \Phi_w(\Omega)\)  satisfy Assumptions~\eqref{aDec} and \eqref{loc}, and let
$(\hatrho)_\epsilon$ be a family of functions
satisfying \eqref{condition rho 1} and \eqref{condition
rho 2}. Then,
\begin{equation}
\label{forsmooth}
\begin{aligned}
\lim_{\epsilon\to0^+} \varrho^\epsilon_{\#,U}(f)
\approx  \varrho_{\phix,U}(c_n|\nabla f|)
\end{aligned}
\end{equation}
for all  $f\in C^\infty(\Omega)$, where $
c_n := \fint_{B(0,1)} |x\cdot e_1| \, dx $.
If, in addition, \(\phi(x,\cdot)\) is continuous for
all \(x\in U\), then \eqref{forsmooth} holds with equality.
\end{lemma}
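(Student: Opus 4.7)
The plan is to reduce the whole lemma to the pointwise identity
\[
\lim_{r\to 0^+}\tfrac{1}{r}M^\#_{B(x,r)}f=c_n|\nabla f(x)|\qquad\text{uniformly in }x\in\overline{U},
\]
valid for every $f\in C^\infty(\Omega)$. This follows from the first-order Taylor expansion $f(y)=f(x)+\nabla f(x)\cdot(y-x)+O(|y-x|^2)$: the linear term averages to zero, so $f_{B(x,r)}=f(x)+O(r^2)$; after the change of variables $y=x+rz$, rotational invariance of the Lebesgue measure on $B(0,1)$ produces the constant $c_n$. The remainder is uniform on $\overline U$ via $\|D^2f\|_{L^\infty(\overline U)}<\infty$. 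Crucially, the same Taylor estimate gives the \emph{global} bound
\[
\tfrac{1}{r}M^\#_{B(x,r)}f\le 2\|\nabla f\|_{L^\infty(\overline U)}\qquad\text{for every }r>0\text{ and }x\in U_r,
\]
since $x\in U_r$ forces $B(x,r)\subset U$.

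I first treat the case in which $\phi(x,\cdot)$ is continuous for every $x\in U$. Setting
\[
G(r):=\int_{U_r}\phi\bigl(x,\tfrac{1}{r}M^\#_{B(x,r)}f\bigr)\,dx,\qquad G(0^+):=\varrho_{\phix,U}(c_n|\nabla f|),
\]
dominated convergence in $x$---with dominator $\phi(x,2\|\nabla f\|_{L^\infty(\overline U)})\chi_U(x)\in L^1(U)$ by Remark~\ref{Remark:assumptions}(6), and using $\chi_{U_r}\nearrow\chi_U$ as $r\to 0^+$---gives $G(r)\to G(0^+)$. The same dominator also provides the uniform-in-$r$ bound $G(r)\leq K$, where $K:=\int_U\phi(x,2\|\nabla f\|_{L^\infty(\overline U)})\,dx<\infty$.

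The outer integral is then handled by a standard two-scale splitting. Given $\eta>0$, choose $\gamma>0$ with $|G(r)-G(0^+)|<\eta$ for $r\in(0,\gamma]$. Then
\[
\varrho^\epsilon_{\#,U}(f)=G(0^+)\int_0^\gamma\hatrho(r)\,dr+\int_0^\gamma(G(r)-G(0^+))\hatrho(r)\,dr+\int_\gamma^\infty G(r)\hatrho(r)\,dr.
\]
The middle summand is bounded by $\eta$; the last by $K\int_\gamma^\infty\hatrho(r)\,dr$, which vanishes as $\epsilon\to 0^+$ by \eqref{condition rho 2}; and $\int_0^\gamma\hatrho(r)\,dr\to 1$ as $\epsilon\to 0^+$ by \eqref{condition rho 1} and \eqref{condition rho 2}. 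Letting $\eta\to 0^+$ yields $\lim_{\epsilon\to 0^+}\varrho^\epsilon_{\#,U}(f)=G(0^+)=\varrho_{\phix,U}(c_n|\nabla f|)$.

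For general $\phi\in\Phi_w(\Omega)$, pick $\psi\in\Phi(\Omega)$ with $\phi\simeq\psi$ via Remark~\ref{onPhis}; by Remark~\ref{Remark:assumptions}(1), $\psi$ inherits \eqref{aDec} and \eqref{loc}. Convexity of $\psi(x,\cdot)$ together with $\psi(x,0)=0$ and the fact that \eqref{aDec} forces $\psi(x,\cdot)$ to be everywhere finite ensure that $\psi(x,\cdot)$ is continuous. Applying the equality case just proved to $\psi$ and translating via $\phi(x,t/L)\leq\psi(x,t)\leq\phi(x,Lt)$ (with $L$ the equivalence constant), combined with the aDec-type estimate $\phi(x,Lt)\leq\cd L^{\phid}\phi(x,t)$ to absorb $L$, yields $\lim_{\epsilon\to 0^+}\varrho^\epsilon_{\#,U}(f)\approx\varrho_{\phix,U}(c_n|\nabla f|)$ in the paper's notation. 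The main technical point throughout is the outer $r$-integral: since $\hatrho$ need not be supported near $0$, only the concentration condition \eqref{condition rho 2}, combined with the uniform-in-$r$ bound $G\leq K$, allows us to discard contributions from $r>\gamma$.
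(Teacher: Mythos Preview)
Your proof is correct, and it takes a genuinely more direct route than the paper's.

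Both proofs start from the same Taylor-based identity $\tfrac1r M^\#_{B(x,r)}f = c_n|\nabla f(x)| + O(r)$, handle the general weak $\Phi$-function by passing to an equivalent $\psi\in\Phi(\Omega)$, and dispose of the outer $r$-integral via the same two-scale splitting at $r=\gamma$ using \eqref{condition rho 1}--\eqref{condition rho 2}. The difference is in how the inner integral is treated. The paper writes $\tfrac1r M^\#_{B(x,r)}f=c_n|\nabla f(x)|+\alpha(x,r)h(x,r)$ and invokes Lemma~\ref{lem:wpm} (the \eqref{aDec}-based splitting $\phi(x,a+b)\le\phi(x,(1+\delta)a)+c_\delta\phi(x,b)$) to separate the main part from the remainder; it then shows separately that the $h$-integral vanishes and finally lets $\delta\to 0$ via continuity and dominated convergence. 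You instead observe the crucial uniform bound $\tfrac1r M^\#_{B(x,r)}f\le 2\|\nabla f\|_{L^\infty(\overline U)}$ for all $r>0$ and $x\in U_r$, which provides an $L^1(U)$ dominator $\phi(\cdot,2\|\nabla f\|_{L^\infty(\overline U)})$ via \eqref{aDec}+\eqref{loc}, and then apply dominated convergence in $x$ directly to $G(r)$. This bypasses Lemma~\ref{lem:wpm} entirely and handles the upper and lower bounds simultaneously. Your argument is shorter and more self-contained; the paper's argument, on the other hand, rehearses the Lemma~\ref{lem:wpm} splitting that is indispensable later in Proposition~\ref{prop:SobolevCase}, where no such uniform $L^\infty$ bound is available.
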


\begin{remark}
Note that \(\phi(x,\cdot)\) is continuous
for
all \(x\in \Omega\) if $\phi\in \Phi(\Omega)$
satisfies
\eqref{aDec}.
\end{remark}

\begin{proof}
Let $f\in C^\infty(\Omega)$. We start by treating the case in which \(\phi(x,\cdot)\) is continuous for
all \(x\in U\). We claim that
\begin{equation}
\label{forsmooth1}
\begin{aligned}
\lim_{\epsilon\to0^+} \varrho^\epsilon_{\#,U}(f)
=  \varrho_{\phix,U}(c_n|\nabla f|).
\end{aligned}
\end{equation}

By the Taylor expansion formula, for any $x\in\Omega$
and \(y\in\Omega\) such that $|y-x|<\dist(x,\partial\Omega)$,
we have\[
f(y) = f(x) + \nabla f(x) \cdot (y-x) + R(x,y),
\]
where $R(x,y)=o(|x-y|)$ as $y\to x$.
%
%
%
Denote $ h(x,r):=\frac{2}{r}\fint_{B(x,r)}|R(x,y)|\,
dy$, for $r>0$ and $x\in\Omega_r$,
and
$ c_n = \fint_{B(0,1)} |x\cdot e_1| \, dx$.
As proved in \cite[Lemma~3.1]{HasR_pp15}, we have the point-wise estimate
\[
c_n\,r\, |\nabla f(x)| - rh(x,r)
\le
M^\#_{B(x,r)} f
\le
c_n\,r\, |\nabla f(x)| + rh(x,r).
\]
Consequently, we can define a function $\alpha:\Omega\times \mathbb{R}^+\to [-1,1]$
such that $\frac1r M^\#_{B(x,r)} f = c_n\, |\nabla f(x)| + \alpha(x,r) h(x,r)$
when $x\in \Omega_r$. Then,
\begin{equation}\label{first estimate rho epsilon}
\begin{split}
\varrho^\epsilon_{\#,U}(f)
=
 \int_0^\infty \int_{U_r} \phi(x,c_n |\nabla f(x)| +\alpha(x,r) h(x,r))  \, dx \, \hatrho(r)\, dr.
\end{split}
\end{equation}

Next, we prove that \(\limsup_{\epsilon\to 0^+} \varrho^\epsilon_{\#,U}(f)\leq \varrho_{\phi(\cdot),U}(c_n|\nabla f|) \). Fix \(\delta>0\).
Because \(c_n |\nabla f(x)| + \alpha(x,r)h(x,r) \leq
c_n |\nabla f(x)| + h(x,r) \), the  monotonicity of $\phi$  and \eqref{wpmiPhi2} yield
\begin{equation*}
\phi(x,c_n |\nabla f(x)| + \alpha(x,r)h(x,r))
\leq
 \phi(x, (1+\delta)c_n  |\nabla f(x)|) +\frac{1}{\cd} \left(1+\frac{1}{\delta}\right)^{\phid} \phi(x, h(x,r)).
\end{equation*}
Hence, invoking \eqref{first estimate rho epsilon} and \eqref{condition rho 1}, we obtain
\begin{equation}\label{upper bound limit}
\begin{aligned}
\limsup_{\epsilon\to 0^+}\varrho^\epsilon_{\#,U}(f) &
\le   \varrho_{\phi(\cdot),U}( (1+\delta)c_n |\nabla f|)\\
&\quad +  \frac{1}{\cd} \left(1+\frac{1}{\delta}\right)^{\phid}
\limsup_{\epsilon\to 0^+}
 \int_0^\infty \int_{U_r} \phi(x,  h(x,r))\, dx\,\hatrho(r)\, dr.
\end{aligned}
\end{equation}
We claim that
\begin{equation}
\label{limffih0}
\lim_{\epsilon\to 0^+}
 \int_0^\infty \int_{U_r} \phi(x,  h(x,r))\, dx\,\hatrho(r)\,
dr =0,
\end{equation}
from which the estimate on the upper limit follows
by dominated convergence as \(\delta\to 0^+\) in
\eqref{upper bound limit} taking also into account the
continuity of \(\phi(x,\cdot)\).

To prove \eqref{limffih0}, we start by observing
that because $U$ is bounded, the set $U_r$
is empty for all $r>0$ sufficiently large. Thus, there exists $r_0>0$ for which
we have
$$
\int_0^\infty\int_{U_r} \phi(x,h(x,r))\, dx\, \hatrho(r)\, dr
=
\int_0^{r_0}\int_{U_r} \phi(x,h(x,r))\, dx\, \hatrho(r)\, dr.
$$
Moreover, since $f\in C^2$, \(|R(x,y)| \leq \| f\|_{W^{2,\infty}(U)}|x-y|^2\) for all
$r\in(0,r_0),$ $x\in U_r,$ and $y\in B(x,r)$. Set
$C:=\| f\|_{W^{2,\infty}(U)}$. Then, for all $r\in(0,r_0)$ and
$x\in U_r$, \(h(x,r) \le 2Cr\). Fix \(0<\gamma < \min\{1,r_0\}\).
We have that   \(\phi(x,h(x,r)) \leq
\phi(x,2C r_0) \)
whenever \(r\in (\gamma, r_0)\); moreover, denoting by
\(c\) the monotonicity constant of  $t\mapsto\frac{\phi(x,t)}t$, we have also
\(\phi(x,h(x,r)) \leq c \gamma \phi(x,2C)\)
whenever \(r\in (0,\gamma]\). Using, in addition,
the condition \(\phi\ge 0\) and \eqref{condition rho 1}, it follows that
\begin{equation*}
\int_0^{r_0}\int_{U_r} \phi(x,h(x,r))\,
dx\, \hatrho(r)\, dr \leq c\gamma\int_U \phi(x,2C)\,dx
+   \int_U\phi(x,2C r_0)\, dx\int_\gamma^{\infty} \hatrho(r)\, dr.
\end{equation*}
In view of  \eqref{condition rho 2} and Assumption \eqref{loc}, together with
Remark~\ref{Remark:assumptions},
letting  \(\epsi\to0^+\)  in this estimate first, and then \(\gamma\to 0^+\), we obtain  \eqref{limffih0}.

Finally, we prove that \(\liminf_{\epsilon\to 0^+}
\varrho^\epsilon_{\#,U}(f)\geq
\varrho_{\phi(\cdot),U}(c_n|\nabla f|) \).  Fix \(\delta>0\), and
denote $a':=c_n |\nabla f(x)|$ and $b':=\alpha(x,r)h(x,r)$.
If   $r> 0$ and $x\in U_r$ are such that \(\alpha(x,r)
<0\), then applying  \eqref{wpmiPhi2} with $a:=\frac{a'+b'}{1+\delta}$
and
$b:=-\frac{b'}{1+\delta}$ gives
\begin{equation}
\label{liforsmooth1}
\begin{aligned}
\phi(x,c_n |\nabla f(x)|+\alpha(x,r)h(x,r))
&= \phi(x,(1+\delta)a)\geq \phi(x, a+b) -
\tfrac{1}{\cd}(1+\tfrac{1}{\delta})^\phid\phi(x,b)\\
&\geq\phi\big(x,\tfrac{c_n |\nabla f(x)|}{1+\delta}\big) -\tfrac{1}{\cd}(1+\tfrac{1}{\delta})^\phid\phi(x,h(x,r)),
\end{aligned}
\end{equation}
where we used the monotonicity of \(\phi\) and the estimate  \(0\leq -\tfrac{b'}{1+\delta}\leq  h(x,r) \). If   $r> 0$ and $x\in U_r$ are such that \(\alpha(x,r)\geq0\), then
the
 monotonicity of \(\phi\) yields
\begin{equation}
\label{liforsmooth2}
\begin{aligned}
\phi(x,c_n |\nabla f(x)|+\alpha(x,r)h(x,r))
&\geq\phi\big(x,\tfrac{c_n |\nabla f(x)|}{1+\delta}\big).
\end{aligned}
\end{equation}
Thus, by \eqref{liforsmooth1}
and
\eqref{liforsmooth2}, for every  $r> 0$ and $x\in U_r$,  we have
\begin{equation*}
\begin{aligned}
\phi(x,c_n |\nabla f(x)|+\alpha(x,r)h(x,r))
\ge
 \phi\big(x,\tfrac{c_n}{1+\delta} |\nabla f(x)|\big)
- \tfrac{1}{\cd}(1+\tfrac{1}{\delta})^\phid\phi(x,h(x,r)).
\end{aligned}
\end{equation*}
Consequently, by \eqref{first estimate rho epsilon}, it follows that
\begin{equation}\label{estbelowrhoepsi0}
\begin{aligned}
\varrho^\epsilon_{\#,U}(f)
\ge
\int_0^\infty\int_{U_r} \Big[ \phi\big(x,\tfrac{c_n}{1+\delta} |\nabla f(x)|\big)
- \tfrac{1}{\cd}(1+\tfrac{1}{\delta})^\phid\phi(x,h(x,r))\Big]\, dx\, \hatrho(r)\, dr.
\end{aligned}
\end{equation}
Fix  $0<\gamma<1$ and $r_0>0$ such that \(U_{r_0} \not=\emptyset\). By \eqref{condition rho 1} and
\eqref{condition
rho 2}, there exists \(\epsi_0=\epsi_0 (\gamma,r_0)\)
such that, for all \(\
\varepsilon\in(0,\varepsilon_0)\), we have
\begin{equation*}
\int_0^{r_0}\hatrho(r)\,dr\ge
1-\gamma.
\end{equation*}
Thus, we have also, for $0<\epsi<\epsi_0$,
\begin{align*}
\int_0^\infty \int_{U_r} \phi\big(x,\tfrac{c_n}{1+\delta} |\nabla f(x)|\big)  \, dx \, \hatrho(r)\, dr
& \ge \int_0^{r_0} \int_{U_{r_0}} \phi\big(x,\tfrac{c_n}{1+\delta} |\nabla f(x)|\big)  \, dx
\, \hatrho(r)\, dr \\
& \ge (1-\gamma)\int_{U_{r_0}}
\phi\big(x,\tfrac{c_n}{1+\delta}  |\nabla f(x)|\big)  \, dx.
\end{align*}
This estimate,  \eqref{estbelowrhoepsi0}, and
\eqref{limffih0} yield
\begin{equation*}
\liminf_{\epsilon\to 0^+}\varrho^\epsilon_{\#,U}(f)
\ge {(1-\gamma)}
  \varrho_{\phix,U_{r_0}}\big(\tfrac{c_n}{1+\delta} |\nabla f|\big).
\end{equation*}
The conclusion then follows
by using the
continuity of \(\phi(x,\cdot)\) and by letting $\delta\to 0^+$, $\gamma\to 0^+$, and
$r_0\to 0^+$ in this order. This completes the proof
of \eqref{forsmooth1} under the continuity assumption.

Suppose now that $\phi$ is a general weak $\Phi$-function. By Remarks~\ref{onPhis}
and \ref{Remark:assumptions},
there exists $\psi\in \Phi(\Omega)$
satisfying the same assumptions as \(\phi\) and such that \(\psi\approx\phi\).
Then, there is \(C>0\)  such that \(\tfrac1C \phi\leq \psi \leq C \phi \). Hence, by the first part of the
proof,
\begin{align*}
&\frac{1}{C^2}\varrho_{\phix,U}(c_n|\nabla f|) \leq \frac{1}{C}\varrho_{\psix,U}(c_n|\nabla f|)
=\frac{1}{C}\liminf_{\epsilon\to0^+} \varrho^\epsilon_{\#,U, \psi}( f)\leq\liminf_{\epsilon\to0^+} \varrho^\epsilon_{\#,U, \phi}( f)
 \\
&\quad\le
\limsup_{\epsilon\to0^+}
\varrho^\epsilon_{\#,U, \phi}( f) \leq C  \limsup_{\epsilon\to0^+}
\varrho^\epsilon_{\#,U, \psi}( f)= C  \varrho_{\psix,U}( c_n|\nabla f|)
\le  C^2 \varrho_{\phix,U}(c_n|\nabla f|).\qedhere
\end{align*}
\end{proof}

Next, we derive an auxiliary upper bound which holds for all functions $f\in L^1_\loc(\Omega)$
with $|\nabla f| \in L^\phix (\Omega)$.

\begin{lemma}\label{lem:sobolev}
Let $\Omega$ be an open set of $\Rn$, let
\(\phi \in \Phi_w(\Omega)\) satisfy Assumptions~\eqref{A}
and
\eqref{aDec}, and let
$(\hatrho)_\epsilon$ be a family of functions
satisfying \eqref{condition rho 1} and \eqref{condition
rho 2}.
If $f\in L^1_\loc(\Omega)$ and $|\nabla f| \in L^\phix(\Omega)$, then
\[
\varrho^\epsilon_{\#,\Omega}(f) \le
c\, \max\big\{ \|\nabla f\|^\phiu_{\phix,\Omega}, \|\nabla f\|^\phid_{\phix,\Omega}\big\}.
\]
\end{lemma}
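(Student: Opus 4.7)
The strategy is to rescale so that $\|\nabla f\|_{\phix,\Omega}=1$, apply the classical $(1,1)$-Poincar\'e inequality to bound $\tfrac{1}{r}M^\#_{B(x,r)}f$ pointwise by $\fint_{B(x,r)}|\nabla f|$, invoke the Jensen-type estimate of Lemma~\ref{lem:key-estimate} to trade the inner average for $\phi(y,|\nabla f(y)|)$ plus $h$-terms, and finally integrate in $x$ and $r$ by Fubini using $\int_0^\infty\hatrho\,dr=1$.

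The case $\lambda:=\|\nabla f\|_{\phix,\Omega}=0$ is trivial, since then $\nabla f=0$ a.e.\ and $M^\#_{B(x,r)}f\equiv 0$. Otherwise set $\tilde f:=f/\lambda$, so that $\varrho_{\phix,\Omega}(|\nabla\tilde f|)\le 1$. Applying \eqref{aInc} pointwise in $x$ when $\lambda\le 1$ and \eqref{aDec} pointwise when $\lambda\ge 1$ gives
\[
\varrho^\epsilon_{\#,\Omega}(f)=\varrho^\epsilon_{\#,\Omega}(\lambda\tilde f)\le c\max\bigl\{\lambda^{\phiu},\lambda^{\phid}\bigr\}\,\varrho^\epsilon_{\#,\Omega}(\tilde f),
\]
so it suffices to bound $\varrho^\epsilon_{\#,\Omega}(\tilde f)$ by a constant independent of $\tilde f$ and $\epsilon$.

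For this, I would first use the pointwise $(1,1)$-Poincar\'e estimate $\tfrac1r M^\#_{B(x,r)}\tilde f\le c_P\fint_{B(x,r)}|\nabla\tilde f|$, which holds for a.e.\ $x\in\Omega_r$ since $\tilde f\in W^{1,1}_{\rm loc}(\Omega)$. Combining monotonicity of $\phi(x,\cdot)$ with \eqref{aDec} (to absorb the constants $c_P$ and $1/\beta'$) and Lemma~\ref{lem:key-estimate} applied to $c_P|\nabla\tilde f|/\beta'$ leads to the pointwise bound
\[
\phi\Bigl(x,\tfrac1r M^\#_{B(x,r)}\tilde f\Bigr)\le c\!\!\fint_{B(x,r)}\!\!\phi(y,|\nabla\tilde f(y)|)\,dy+c\,h(x)+c\!\!\fint_{B(x,r)}\!\!h(y)\,dy.
\]
Integrating in $x\in\Omega_r$, swapping the ball-average $\fint_{B(x,r)}$ with the outer integral via Fubini (using $y\in B(x,r)\Leftrightarrow x\in B(y,r)$), and invoking $\varrho_{\phix,\Omega}(|\nabla\tilde f|)\le 1$ together with $h\in L^1(\Omega)$ bounds $\int_{\Omega_r}\phi(x,\tfrac1r M^\#_{B(x,r)}\tilde f)\,dx$ by a constant independent of $r$. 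Integrating in $r$ against $\hatrho$ (whose mass equals $1$ by \eqref{condition rho 1}) then yields the desired uniform control of $\varrho^\epsilon_{\#,\Omega}(\tilde f)$.

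The main obstacle is that Lemma~\ref{lem:key-estimate} controls only the truncated quantity $\phi(x,\beta'\min\{T_B,\fint_B\,\cdot\,\})$ with $T_B:=(\phi^-_B)^{-1}(1/|B|)$, whereas Poincar\'e produces $\phi(x,c_P\fint_B|\nabla\tilde f|)$ without truncation. On the ``good'' balls where $c_P\fint_B|\nabla\tilde f|\le\beta'T_B$ the min is inactive and the two coincide up to \eqref{aDec}. On the remaining balls, the normalization $\varrho_{\phix,\Omega}(|\nabla\tilde f|)\le 1$, combined with Lemma~\ref{onA1s} applied through the Jensen-type reasoning in the proof of Lemma~\ref{lem:key-estimate}, forces $\fint_B|\nabla\tilde f|$ to be comparable to $T_B$ up to a universal constant, and \eqref{aDec} again absorbs the ratio. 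Making this case distinction precise, together with the Fubini swap, is the technically delicate part; the remaining ingredients are routine.
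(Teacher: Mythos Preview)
Your strategy---normalize, Poincar\'e, Jensen-type estimate, Fubini, unscale---is exactly the paper's.  The only divergence is in which Jensen-type inequality is invoked: the paper does not use Lemma~\ref{lem:key-estimate} here but cites \cite[Lemma~4.4]{Has16}, which under the normalization $\varrho_{\phix,\Omega}(|\nabla f|)\le 1$ already gives the \emph{untruncated} estimate
\[
\phi\Big(x,\beta'\fint_{B}|\nabla f|\Big)\le \fint_{B}\phi(y,|\nabla f|)\,dy + h(x)+\fint_{B}h\,dy,
\]
so no case distinction is needed.  Your ``main obstacle'' paragraph is in effect a sketch of how to recover this stronger statement from the ingredients of Lemma~\ref{lem:key-estimate}: under the normalization the $\phi_B^-$-Jensen step yields $\tfrac12 F_1\le T_B$, so the minimum never truncates the $F_1$-part, while the $F_2$-part is handled by \eqref{A2} with no truncation in the first place.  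Note, however, that your phrasing ``$\fint_B|\nabla\tilde f|$ is comparable to $T_B$'' is not literally correct for large balls (where $\sigma$ may dominate $T_B$); what actually happens is that the $F_2\le\sigma$ contribution is controlled separately via \eqref{A2}, not absorbed into $T_B$.

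One small slip: in the scaling step you invoke \eqref{aInc}, which is not among the hypotheses; the paper uses \eqref{aInc}$_1$ (always valid for weak $\Phi$-functions, with $\phiu=1$) for the case $\lambda\le 1$.
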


\begin{proof}
By Remarks~\ref{onPhis}, \ref{casePhi}, and \ref{Remark:assumptions}
we may assume  without loss of generality that \(\phi\in
\Phi(\Omega)\). Then, in view of  Assumptions~\eqref{A0}
and \eqref{aInc}$_1$, we have $L^\phix(B)\subset L^1(B)$
for every bounded set \(B\subset\Omega\) by \cite[Lemma~4.4]{HarHK_pp16}.

Let $f\in L^1_\loc(\Omega)$
with $|\nabla f| \in L^\phix(\Omega)$ be such
that
$\|\nabla f\|_{\phix,\Omega} \le 1$. Fix $r>0$ and $x\in\Omega_r$.
By the Poincar\'e inequality in $L^1$, we have
\begin{equation}
\label{byPoinc}
\begin{aligned}
\frac1r M^\#_{B(x,r)}f = \frac1r\fint_{B(x,r)} | f(y) - f_{B(x,r)}
| \, dy
\le
c  \fint_{B(x,r)} | \nabla f(y) | \, dy.
\end{aligned}
\end{equation}
On the other hand, in view of Lemma~\ref{onA1s}, we may invoke \cite[Lemma 4.4]{Has16} (with \(\gamma=1)\) that gives the existence of a constant, \(\beta'\in(0,1)\), depending only on the constants in \eqref{A},  such that
\begin{equation}\label{byH44}
\begin{aligned}
\phi\bigg(x,\beta'\fint_{B(x,r)}
|\nabla f(y)|\, dy\bigg)  \leq \fint_{B(x,r)}
\phi(y,|\nabla f(y)|)\, dy+ h(x)+ \fint_{B(x,r)}h(y)\,dy.
\end{aligned}
\end{equation}
Using the monotonicity of \(\phi\), \eqref{aDec},  \eqref{byPoinc}, and \eqref{byH44},
we obtain
\begin{equation}\label{byPoincandH44}
\begin{aligned}
\varrho^\epsilon_{\#,\Omega}(f)
&\lesssim\int_0^\infty \int_{\Omega_r} \phi\bigg(x,\beta'\fint_{B(x,r)}
|\nabla f(y)|\, dy\bigg)dx \, \hatrho(r)\,
dr\\
&\leq
 \int_0^\infty \int_{\Omega_r} \bigg(\fint_{B(x,r)}
\phi(y,|\nabla f(y)|)\, dy + h(x)+ \fint_{B(x,r)}h(y)\,dy\bigg)
\, dx \, \hatrho(r)\, dr.
\end{aligned}
\end{equation}

Next, by changing the order of integration, we observe that
\begin{align*}
\int_{\Omega_r} \fint_{B(x,r)} \phi(y,|\nabla
f(y)|) \, dy \, dx
 &= \int_{\Omega_r} \int_\Omega \phi(y,|\nabla
f(y)|) \frac{\chi_{B(y,r)}(x)}{|B(x,r)|} \,
dy\,dx
\\
& \le \int_\Omega \phi(y,|\nabla f(y)|) \, dy
=\varrho_{\phix,\Omega}(|\nabla f|)\le 1.
\end{align*}
Similarly,
\begin{equation*}
\begin{aligned}
\int_{\Omega_r} \fint_{B(x,r)} h(y) \, dy \, dx \leq
\int_\Omega h(y)\, dy.
\end{aligned}
\end{equation*}
Consequently, because $h\in L^1(\Omega)$
and $\int_0^\infty \hatrho(r) \, dr =1$, from \eqref{byPoincandH44}
we
conclude that
\begin{equation}\label{estimatecasele 1}
\varrho^\epsilon_{\#,\Omega}(f) \le c\ \text{
for all }f\in L^1_\loc(\Omega)\text{ with }|\nabla
f| \in L^\phix(\Omega) \text{ and } \|\nabla f\|_{\phix,\Omega}
\le 1.
\end{equation}

By considering the cases $\lambda\le 1$ and $\lambda>1$ and use
Assumptions \eqref{aInc}$_1$ and \eqref{aDec}, respectively, 
we find that $\varrho^\epsilon_{\#,\Omega}(f) \le
c\varrho^\epsilon_{\#,\Omega}(\frac f \lambda) \max\{ \lambda^\phiu, \lambda^\phid\}$
for all $\lambda>0$ and $f\in L^1_\loc(\Omega)$ with $|\nabla f| \in L^\phix(\Omega)$.
Using this estimate with $\lambda:= \|\nabla f\|_{\phix,\Omega}+\delta$ for $\delta>0$ and then
invoking \eqref{estimatecasele 1}, it follows that
\begin{align*}
\varrho^\epsilon_{\#,\Omega}(f) & \le
c\varrho^\epsilon_{\#,\Omega}\left(\frac f {\|\nabla f\|_{\phix,\Omega}+\delta}\right)
\max\big\{ {(\|\nabla f\|_{\phix,\Omega}+\delta)}^\phiu,
{(\|\nabla f\|_{\phix,\Omega}+\delta)}^{\phid}\big\}\\
&\le c \max\big\{ {(\|\nabla f\|_{\phix,\Omega}+\delta)}^\phiu,
{(\|\nabla f\|_{\phix,\Omega}+\delta)}^{\phid}\big\}.
\end{align*}
Letting  $\delta\to
0^+$, we  conclude the proof of Lemma~\ref{lem:sobolev}.
\end{proof}


\section{Main results}\label{sect:main}

In this section we prove our main result, which
provides a characterization of generalized Orlicz spaces.
Theorem~\ref{main theorem} is an immediate
consequence of Propositions~\ref{prop:SobolevCase}, \ref{prop:SobolevCase2} and
\ref{prop:showsSobolev} below.

\begin{proposition}\label{prop:SobolevCase}
Let $\Omega\subset\Rn$ be open, let $\phi\in \Phi_w(\Omega)$
satisfy Assumptions~\eqref{A}, \eqref{aInc}, and \eqref{aDec}, and let
$(\hatrho)_\epsilon$ be a family of functions
satisfying \eqref{condition rho 1} and \eqref{condition
rho 2}.
Assume that $f\in L^1_\loc(\Omega)$
with $|\nabla f| \in L^\phix(\Omega)$. Then,
\begin{equation}\label{main limit general}
\lim_{\epsilon\to 0^+} \varrho^\epsilon_{\#,\Omega}(f)
\approx \varrho_{\phix,\Omega}(c_n |\nabla f|)
< \infty
\end{equation}
where $ c_n = \fint_{B(0,1)} |x\cdot e_1| \, dx$.
If, in addition, \(\phi(x,\cdot)\) is continuous for
all \(x\in \Omega\), then \eqref{main limit general} holds with equality.
\end{proposition}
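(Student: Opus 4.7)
The plan is to reduce to the smooth case handled by Lemma~\ref{lem:smooth} via mollification, with the mismatch between $f$ and its mollification controlled by Lemma~\ref{lem:sobolev}. Fix an exhausting sequence $U \subset\subset \Omega$ and, for small $\delta>0$, let $f^\delta := f * \eta_\delta$ (standard mollification); for $\delta < \dist(U,\partial\Omega)$ the function $f^\delta$ is smooth on a neighborhood of $U$ and $\nabla f^\delta = (\nabla f) * \eta_\delta$.

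For the upper bound, use the linearity and the triangle-inequality property of $M^\#_{B(x,r)}$ in the second argument to obtain $M^\#_{B(x,r)} f \le M^\#_{B(x,r)} f^\delta + M^\#_{B(x,r)}(f - f^\delta)$, and then apply Lemma~\ref{lem:wpm} with a free parameter $\tau>0$ to get the pointwise splitting
\[
\phi\bigl(x, \tfrac{1}{r} M^\#_{B(x,r)} f\bigr)
\le
\phi\bigl(x, \tfrac{1+\tau}{r} M^\#_{B(x,r)} f^\delta\bigr)
+ C_\tau\, \phi\bigl(x, \tfrac{1}{r} M^\#_{B(x,r)}(f - f^\delta)\bigr),
\]
with $C_\tau = \tfrac{1}{\cd}(1+\tfrac{1}{\tau})^{\phid}$. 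Integrating in $x\in U_r$ against $\hatrho(r)\,dr$ and using homogeneity of $M^\#$ yields
\[
\varrho^\epsilon_{\#,U}(f) \le \varrho^\epsilon_{\#,U}\bigl((1+\tau)f^\delta\bigr) + C_\tau\,\varrho^\epsilon_{\#,\Omega}(f - f^\delta).
\]
Pass $\epsilon\to 0^+$: Lemma~\ref{lem:smooth} bounds the first term (up to an absolute constant) by $\varrho_{\phix,U}\bigl(c_n(1+\tau)|\nabla f^\delta|\bigr)$, while Lemma~\ref{lem:sobolev} bounds the second uniformly in $\epsilon$ by $c\,\max\{\|\nabla(f-f^\delta)\|^{\phiu}_{\phix,\Omega}, \|\nabla(f-f^\delta)\|^{\phid}_{\phix,\Omega}\}$.

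Then send $\delta\to 0^+$ using convergence of mollifications in $L^\phix$ (valid under Assumptions \eqref{A} and \eqref{aDec}): the error vanishes, and the smooth piece tends to $\varrho_{\phix,U}(c_n(1+\tau)|\nabla f|)$ by dominated convergence combined with the fact that a subsequence of $\nabla f^\delta$ converges a.e.\ to $\nabla f$ and is dominated by an $L^\phix$ function. Finally, let $\tau\to 0^+$ (so $(1+\tau)^{\phid}\to 1$) and exhaust $U\uparrow\Omega$ by monotone convergence. For the lower bound, symmetrically exploit $M^\#_{B(x,r)} f^\delta \le M^\#_{B(x,r)} f + M^\#_{B(x,r)}(f^\delta - f)$, apply Lemma~\ref{lem:wpm} with parameter $\tau$, rearrange, and invoke Lemma~\ref{lem:smooth} on $f^\delta$ to deduce $\liminf_{\epsilon\to 0^+}\varrho^\epsilon_{\#,U}(f) \gtrsim \varrho_{\phix,U}\bigl(\tfrac{c_n}{1+\tau}|\nabla f|\bigr) - C_\tau\cdot\text{(error)}$, then let $\delta, \tau$, and $U$ tend to their limits as before. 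Finiteness $\varrho_{\phix,\Omega}(c_n|\nabla f|)<\infty$ is immediate from $|\nabla f|\in L^\phix(\Omega)$ together with \eqref{aDec}. When $\phi(x,\cdot)$ is continuous, Lemma~\ref{lem:smooth} gives equality in the smooth step, and the multiplicative constants $(1+\tau)^{\phid}$ collapse to $1$, upgrading $\approx$ to equality.

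The main obstacle is the $L^\phix$-convergence $\|\nabla(f-f^\delta)\|_{\phix,\Omega}\to 0$, which ultimately rests on density of smooth functions in the generalized Orlicz--Sobolev space; this is the step where Assumption~\eqref{A} enters crucially (via the results on mollification in generalized Orlicz spaces). Once this density is in hand, the remainder is a bookkeeping exercise coordinating the four limits $\epsilon \to 0^+$, $\delta \to 0^+$, $\tau \to 0^+$, and $U\uparrow\Omega$ in the correct order so that each error term is controlled by the estimates provided by Lemmas~\ref{lem:smooth} and \ref{lem:sobolev}.
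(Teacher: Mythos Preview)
Your approach is essentially the paper's own: approximate $f$ by smooth functions on a compactly contained $U$, split via Lemma~\ref{lem:wpm}, apply Lemma~\ref{lem:smooth} to the smooth piece and Lemma~\ref{lem:sobolev} to the remainder, then pass to the limit. The paper uses a generic density sequence $g_\nu\in C^\infty(\Omega)$ from \cite{HarHK_pp16} rather than mollifiers, but this is cosmetic.

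There is, however, a genuine gap in your passage from $U$ to $\Omega$ in the \emph{upper bound}. What you establish is
\[
\limsup_{\epsilon\to 0^+}\varrho^\epsilon_{\#,U}(f)\;\lesssim\;\varrho_{\phix,\Omega}(c_n|\nabla f|),
\]
and you then write ``exhaust $U\uparrow\Omega$ by monotone convergence''. But monotone convergence goes the wrong way here: $\varrho^\epsilon_{\#,U}(f)$ is increasing in $U$, so knowing each $\limsup_\epsilon \varrho^\epsilon_{\#,U}(f)$ is bounded says nothing a priori about $\limsup_\epsilon \varrho^\epsilon_{\#,\Omega}(f)$. One must control the boundary layer
\[
\int_0^\infty\!\!\int_{\Omega_r\setminus U_r}\phi\bigl(x,\tfrac1r M^\#_{B(x,r)}f\bigr)\,dx\,\hatrho(r)\,dr
\]
and show that its $\limsup_\epsilon$ tends to $0$ as $U\nearrow\Omega$. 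The paper does this explicitly (claim~\eqref{almlimsup3}): after normalizing so that $\|\nabla f\|_{\phix,\Omega}\le 1$, it reuses the Poincar\'e/key-estimate machinery from the proof of Lemma~\ref{lem:sobolev}, splits at a scale $\gamma$, and shows the contribution from $r\le\gamma/2$ is bounded by $\varrho_{\phix,\Omega\setminus U_\gamma}(|\nabla f|)+\|h\|_{L^1(\Omega\setminus U_\gamma)}$ while the contribution from $r>\gamma/2$ vanishes by~\eqref{condition rho 2}. This step is not bookkeeping; it is where the Sobolev-type estimate has to be localized near $\partial\Omega$, and your outline does not supply it. (For the lower bound your exhaustion \emph{is} fine, since $\varrho^\epsilon_{\#,\Omega}(f)\ge \varrho^\epsilon_{\#,U}(f)$.)

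A minor related point: you write $\varrho^\epsilon_{\#,\Omega}(f-f^\delta)$ and $\|\nabla(f-f^\delta)\|_{\phix,\Omega}$, but $f^\delta$ is only defined on $\Omega_\delta$, so these should be taken on $U$. This is harmless once fixed, since Lemma~\ref{lem:sobolev} applies on any open set.
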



\begin{proof}
We start with the upper bound.
Let $U\subset\subset\Omega$. Using the same arguments
as in \cite[Theorems 6.5 and 6.6]{HarHK_pp16}, we find
$g_\nu\in C^\infty(\Omega)$, $\nu\in\N$, such that
$(\nabla g_\nu)_{n\in\NN}$ converges to $\nabla f$ in $L^\phix(U;\mathbb{R}^n)$. For  $r>0$ and $x\in U_r$, we have
$M^\#_{B(x,r)}f \le M^\#_{B(x,r)}(f-g_\nu)+ M^\#_{B(x,r)}g_\nu$
by the triangle inequality. Combining this estimate with \eqref{wpmiPhi2} applied to $a=\tfrac1r M^\#_{B(x,r)}g_\nu$ and $b=\tfrac1rM^\#_{B(x,r)}(f-g_\nu)$, we obtain
\begin{equation*}
\varrho^\epsilon_{\#,U}(f)
\le \varrho^\epsilon_{\#,U}((1+\delta)g_\nu) +
 c_\delta\varrho^\epsilon_{\#,U}(f-g_\nu)
\end{equation*}
for all \(\delta>0\), where we also used the monotonicity
of \(\phi(x,\cdot)\).
Invoking now
Lemmas~\ref{lem:smooth} and \ref{lem:sobolev}, we conclude
that
\begin{equation}\label{almlimsup1}
\limsup_{\epsilon\to 0^+} \varrho^\epsilon_{\#,U}(f)
\le  \varrho_{\phix,U}(c_n (1+\delta)|\nabla g_\nu|)
+ c_\delta\!
\max\big\{ \|\nabla (f-g_\nu)\|_{\phix,U}^{\phiu},
\|\nabla (f-g_\nu)\|_{\phix,U}^{\phid}\big\}.
\end{equation}

Because \(\|\nabla (f-g_\nu)\|_{\phix,U}\to0\), it follows from \eqref{aDec} that
\(\lim_{\nu\to\infty}\varrho_{\phix, U}(c_n (1+\delta)|\nabla (f-g_\nu)|)=0\).
Thus, letting \(\nu\to\infty\) in \eqref{almlimsup1} and using \eqref{wpmiPhi2}
 again, we obtain
\begin{equation}\label{onubmod}
\limsup_{\epsilon\to 0^+} \varrho^\epsilon_{\#,U}(f)
\le
\varrho_{\phix, U}(c_n (1+\delta)^2|\nabla f|)
\le
\varrho_{\phix, \Omega}(c_n (1+\delta)^2|\nabla f|).
\end{equation}
By \eqref{aDec}, $\varphi(x,c_n(1+\delta)^2|\nabla f(x)|) \le c\varphi(x,c_n|\nabla f(x)|)$.
This proves that \(\limsup_{\epsilon\to 0^+} \varrho^\epsilon_{\#,U}(f)
\le c
\varrho_{\phix, U}(c_n |\nabla f|)\)  for a general $\phi$. If, in addition,
 $\phi(x,\cdot)$ is  continuous,  we use \(c\varphi(x,c_n|\nabla
f(x)|)\) as a majorant and let $\delta\to
0$ in \eqref{onubmod}. Then, Lebesgue's dominated convergence theorem yields
\[
\limsup_{\epsilon\to 0^+} \varrho^\epsilon_{\#,U}(f)
\le \varrho_{\phix,\Omega}(c_n |\nabla f|).
\]

We claim that
\begin{equation}
\label{almlimsup3}
\begin{aligned}
\limsup_{U \nearrow \Omega} \limsup_{\epsi\to0^+} \int_0^\infty\int_{\Omega_r\setminus U_r}
\phi\big(x,\tfrac1r M^\#_{B(x,r)}f\big) \,dx \,  \hatrho(r)\,
dr = 0,
\end{aligned}
\end{equation}
which, together with the bound on $\limsup_{\epsilon\to 0^+} \varrho^\epsilon_{\#,U}(f)$
established above, concludes the proof of the upper bound.

To prove \eqref{almlimsup3}, we observe that
replacing $f$ by $\tilde f := \Vert\nabla
f\Vert_{\phix,\Omega}^{-1}f$
if necessary, we may assume that $\|\nabla
f\|_{\phix,\Omega}\le 1$.
Indeed, by \eqref{aDec}, if \eqref{almlimsup3} holds
for \(\tilde f\), then it also holds
for $f$.
Arguing as in
Lemma~\ref{lem:sobolev}, we find that
\begin{equation}
\label{almlimsup4}
\begin{aligned}
& \int_0^\infty\int_{\Omega_r\setminus U_r}
\phi\big(x, \tfrac1r M^\#_{B(x,r)}f \big) \,
dx\,  \hatrho(r)\, dr \\
& \quad \le c \int_0^\infty
\int_{\Omega_r\setminus U_r} \bigg(\fint_{B(x,r)}
\phi(y,|\nabla f(y)|)\, dy  +h(x)+ \fint_{B(x,r)}h(y)\,dy\bigg) \, dx
\,  \hatrho(r)\, dr.
\end{aligned}
\end{equation}
Let $\gamma>0$. If $r\le \frac\gamma2$, then
\(y\in \Omega\backslash U_\gamma\)
 whenever \(x\in \Omega_r\backslash U_r \). Therefore,
  we may estimate the right-hand side of \eqref{almlimsup4}
  as follows:
\begin{equation}
\label{almlimsup5}
\begin{aligned}
& \int_0^\infty
\int_{\Omega_r\setminus U_r} \bigg(\fint_{B(x,r)}
\phi(y,|\nabla f(y)|)\, dy  +h(x)+ \fint_{B(x,r)}h(y)\,dy\bigg) \, dx
\,  \hatrho(r)\, dr \\
&\quad \le
\int_0^\infty
\int_{\Omega_r\setminus U_r} \bigg(\int_{\Omega\backslash U_\gamma} (\phi(y,|\nabla f(y)|)+h(y))\tfrac{\chi_{B(y,r)}(x)}{|B(0,r)|}\, dy + h(x)\bigg) \, dx
\,  \hatrho(r)\, dr \\
&\qquad +
\int_\frac{\gamma}{2}^\infty
\int_{\Omega_r} \bigg(\fint_{B(x,r)} \phi(y,|\nabla f(y)|)\, dy  +h(x)+ \fint_{B(x,r)}h(y)\,dy\bigg)
\, dx\,  \hatrho(r)\, dr.
\end{aligned}
\end{equation}
Changing the order of integration as in Lemma~\ref{lem:sobolev}, from \eqref{almlimsup4} and \eqref{almlimsup5}, we obtain
\begin{equation*}
\begin{aligned}
&\int_0^\infty\int_{\Omega_r\setminus U_r}
\phi\big(x, \tfrac1r M^\#_{B(x,r)}f \big) \,
dx\,  \hatrho(r)\, dr\\
&\quad \lesssim \varrho_{\phix, \Omega\backslash U_\gamma}(|\nabla f|)+2\|h\|_{L^1(\Omega\backslash U_\gamma)} + \big(\varrho_{\phix,\Omega}(|\nabla f|) + 2\|h\|_{L^1(\Omega)}\big)
\int_\frac\gamma2^\infty
\hatrho(r)\, dr.
\end{aligned}
\end{equation*}
Then, \eqref{almlimsup3} follows by \eqref{condition rho 2} as $\epsilon\to 0$,
$\gamma\to 0$, and \(U\to\Omega\), in
this order.

\smallskip
It remains to prove the lower bound.
The proof of this estimate is similar to the proof of the upper bound once we show that for all \(U\subset\subset\Omega\),
we have
\begin{equation}\label{ftlb}
\varrho^\epsilon_{\#,U}(f)
\ge \varrho^\epsilon_{\#,U}(\tfrac1{(1+\delta)}g_\nu) -  c_\delta\varrho^\epsilon_{\#,U}(f-g_\nu),
\end{equation}
where \(c_\delta = \tfrac1\cd \big(1 + \tfrac1\delta\big)^\phid\).
Fix \(r>0\) and \(x\in U_r\),
and set $a':=\tfrac1r M^\#_{B(x,r)}g_\nu$
and $b':=\tfrac1rM^\#_{B(x,r)}(f-g_\nu)$. Note that \(\tfrac1r M^\#_{B(x,r)}f \geq |a'-b'|\).
We consider two cases:
\begin{itemize}
\item
If \(a'-b' \geq 0\), then using \eqref{wpmiPhi2} with \(a = \tfrac{a'-b'}{1+\delta}\) and  \(b = \tfrac{b'}{1+\delta}\),
we obtain
\begin{equation*}
\begin{aligned}
\phi(x, a'-b') \geq \phi\big(x, \tfrac{a'}{1+\delta}\big)
- c_\delta \phi\big(x, \tfrac{b'}{1+\delta}\big)\geq \phi\big(x, \tfrac{a'}{1+\delta}\big)
- c_\delta \phi(x, b'),
\end{aligned}
\end{equation*}
where in the last inequality we used the fact  that \(\phi(x,\cdot)\)
is increasing and \(\tfrac1{1+\delta}\leq1\).
\item
If \(a'-b' \leq 0\), then
\begin{equation*}
\begin{aligned}
\phi\big(x, \tfrac{a'}{1+\delta}\big) \leq \phi\big(x, \tfrac{b'}{1+\delta}\big) \leq
 c_\delta \phi(x, b')
\end{aligned}
\end{equation*}
because \(\phi(x,\cdot)\)
is increasing,  \(\tfrac1{1+\delta}\leq1\), and \(c_\delta
\geq 1\). Thus,
\begin{equation*}
\begin{aligned}
 0\geq\phi\big(x, \tfrac{a'}{1+\delta}\big) -c_\delta \phi(x, b').
\end{aligned}
\end{equation*}
\end{itemize}
Splitting the inner integral defining \(\varrho^\epsilon_{\#,U}(f)\) according to these two cases, we conclude that   \eqref{ftlb} holds.
\end{proof}

\begin{remark}
In the proof of the previous proposition the assumption
\eqref{aInc} was used only for the density of smooth functions
in the Sobolev space. Presumably, \eqref{aInc} is not really needed
for this, but it was used in the cited reference.
\end{remark}

\begin{proposition}\label{prop:SobolevCase2}
Under the assumptions of Proposition~\ref{prop:SobolevCase},  we have
\begin{equation}
\label{eq:limnormse}
\begin{aligned}
\lim_{\epsilon\to 0^+}\| f\|^\epsilon_{\#,\Omega} \approx
c_n \| \nabla f\|_{\phix,\Omega}.
\end{aligned}
\end{equation}
If, in addition, \(\phi\in \Phi(\Omega)\), then \eqref{eq:limnormse} holds with equality.
\end{proposition}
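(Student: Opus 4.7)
The strategy is to deduce the quasi-norm statement from the modular statement of Proposition~\ref{prop:SobolevCase}, applied to $f/\lambda$ for every $\lambda>0$. Since $|\nabla(f/\lambda)|=|\nabla f|/\lambda\in L^\phix(\Omega)$, that proposition yields a constant $C\ge 1$, independent of $\lambda$, such that
\[
\tfrac{1}{C}\varrho_{\phix,\Omega}(c_n|\nabla f|/\lambda)\le\liminf_{\epsilon\to 0^+}\varrho^\epsilon_{\#,\Omega}(f/\lambda)\le\limsup_{\epsilon\to 0^+}\varrho^\epsilon_{\#,\Omega}(f/\lambda)\le C\,\varrho_{\phix,\Omega}(c_n|\nabla f|/\lambda).
\]
The other key tool is the scaling consequence of \eqref{aInc}: $\phi(x,t/\mu)\le\cu\mu^{-\phiu}\phi(x,t)$ for $\mu\ge 1$, which on integration yields $\varrho^\epsilon_{\#,\Omega}(g/\mu)\le\cu\mu^{-\phiu}\varrho^\epsilon_{\#,\Omega}(g)$, and similarly for $\varrho_{\phix,\Omega}$.

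For the upper bound on $\limsup_\epsilon\|f\|^\epsilon_{\#,\Omega}$, fix $\lambda>c_n\|\nabla f\|_{\phix,\Omega}$ and $\mu>(\cu C)^{1/\phiu}$. The unit ball property gives $\varrho_{\phix,\Omega}(c_n|\nabla f|/\lambda)\le 1$, so the equivalence yields $\limsup_\epsilon\varrho^\epsilon_{\#,\Omega}(f/\lambda)\le C$; the scaling then gives $\varrho^\epsilon_{\#,\Omega}(f/(\lambda\mu))\le\cu\mu^{-\phiu}C<1$ for $\epsilon$ small, whence $\|f\|^\epsilon_{\#,\Omega}\le\lambda\mu$ eventually. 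Letting $\lambda\searrow c_n\|\nabla f\|_{\phix,\Omega}$ and $\mu\searrow(\cu C)^{1/\phiu}$ gives $\limsup_\epsilon\|f\|^\epsilon_{\#,\Omega}\lesssim c_n\|\nabla f\|_{\phix,\Omega}$. Symmetrically, for the lower bound, fix $\lambda>\liminf_\epsilon\|f\|^\epsilon_{\#,\Omega}$ and $\mu>(\cu C)^{1/\phiu}$: along a subsequence $\epsilon_k\to 0$ we have $\varrho^{\epsilon_k}_{\#,\Omega}(f/\lambda)\le 1$, hence the equivalence gives $\varrho_{\phix,\Omega}(c_n|\nabla f|/\lambda)\le C$, and the scaling gives $\varrho_{\phix,\Omega}(c_n|\nabla f|/(\lambda\mu))<1$, forcing $c_n\|\nabla f\|_{\phix,\Omega}\le\lambda\mu$. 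This establishes \eqref{eq:limnormse}.

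For the equality case $\phi\in\Phi(\Omega)$, the remark following Lemma~\ref{lem:smooth} ensures $\phi(x,\cdot)$ is continuous, so Proposition~\ref{prop:SobolevCase} upgrades to the identity $\lim_\epsilon\varrho^\epsilon_{\#,\Omega}(f/\lambda)=\varrho_{\phix,\Omega}(c_n|\nabla f|/\lambda)$ for every $\lambda>0$. Set $L:=c_n\|\nabla f\|_{\phix,\Omega}$. Convexity of $\phi(x,\cdot)$ together with $\phi(x,0)=0$ (Remark~\ref{onPhis}) gives the sharper scaling $\varrho_{\phix,\Omega}(g/\mu)\le\tfrac{1}{\mu}\varrho_{\phix,\Omega}(g)$ for all $\mu\ge 1$, now with constant exactly $1$. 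For $\lambda>L$ and $\mu>1$, this and the unit ball property yield $\varrho_{\phix,\Omega}(c_n|\nabla f|/(\lambda\mu))\le 1/\mu<1$; the identity then forces $\varrho^\epsilon_{\#,\Omega}(f/(\lambda\mu))\le 1$ eventually, so $\|f\|^\epsilon_{\#,\Omega}\le\lambda\mu$ for small $\epsilon$, and sending $\mu\to 1^+$ followed by $\lambda\to L^+$ gives $\limsup_\epsilon\|f\|^\epsilon_{\#,\Omega}\le L$. Conversely, for $\lambda<L$ the definition of $L$ gives $\varrho_{\phix,\Omega}(c_n|\nabla f|/\lambda)>1$, so the identity forces $\varrho^\epsilon_{\#,\Omega}(f/\lambda)>1$ eventually, giving $\|f\|^\epsilon_{\#,\Omega}\ge\lambda$ and thus $\liminf_\epsilon\|f\|^\epsilon_{\#,\Omega}\ge L$.

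The main obstacle is the equality case: converting pointwise convergence of modulars into convergence of Luxemburg norms requires the \emph{strict} bound $\varrho_{\phix,\Omega}(c_n|\nabla f|/(\lambda\mu))<1$ (not merely $\le 1$) so that $\lim_\epsilon\varrho^\epsilon_{\#,\Omega}(f/(\lambda\mu))<1$ can be transferred to $\varrho^\epsilon_{\#,\Omega}(f/(\lambda\mu))\le 1$ for small $\epsilon$. The convex scaling with constant exactly $1$ supplies this strictness for $\mu$ arbitrarily close to $1$, which is why the equivalence sharpens to an equality precisely when $\phi\in\Phi(\Omega)$; in the purely weak setting, the constant $\cu\ge 1$ only allows $\mu$ to approach $(\cu C)^{1/\phiu}$, yielding the norm equivalence rather than identity.
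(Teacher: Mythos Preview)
Your argument is correct. Both the equivalence and the equality case go through as written: the unit-ball implications you use ($\|g\|<1\Rightarrow\varrho(g)\le 1$ and $\varrho(g)\le 1\Rightarrow\|g\|\le 1$) hold for any increasing $\phi$, the \eqref{aInc} scaling $\phi(x,t/\mu)\le\cu\mu^{-\phiu}\phi(x,t)$ is exactly the almost-increasing property, and in the lower bound the passage to a subsequence is harmless because $\liminf_\epsilon\varrho^\epsilon_{\#,\Omega}(f/\lambda)$ is bounded above by the liminf along any subsequence.

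The route, however, differs from the paper's in its overall organization. The paper first proves the \emph{equality} case for $\phi\in\Phi(\Omega)$ (using convexity to obtain the strict inequality $\varrho_{\phix,\Omega}(c_n|\nabla g_\delta|)<1$, just as you do), and then deduces the equivalence for general $\phi\in\Phi_w(\Omega)$ by passing to an equivalent $\psi\in\Phi(\Omega)$ via Remarks~\ref{onPhis} and \ref{Remark:assumptions}, so that $\|\cdot\|^\epsilon_{\#,\Omega}$ and $\|\cdot\|_{\phix,\Omega}$ are each comparable to their $\psi$-counterparts. You instead attack the $\Phi_w$ equivalence \emph{directly} using \eqref{aInc}, without ever invoking an equivalent convex $\Phi$-function, and treat the $\Phi$ equality as a separate refinement. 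Your approach is more self-contained and makes the role of \eqref{aInc} explicit; the paper's reduction is shorter once the $\Phi$ case is in hand. For the equality lower bound you also argue slightly differently: rather than taking a subsequence realizing $\liminf_\epsilon\|f\|^\epsilon_{\#,\Omega}$ and adding a $\delta$, you take $\lambda<L$ and use $\varrho_{\phix,\Omega}(c_n|\nabla f|/\lambda)>1$ directly, which is a clean alternative.
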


\begin{proof}
Suppose first that \(\phi\in \Phi(\Omega)\). Since $\phi$ is also doubling,
\(\phi(x,\cdot)\)  is continuous; thus,
 the modular inequality \eqref{main limit general} holds with equality.
Fixing \(\delta>0\) and applying  this equality to the function
\begin{equation*}
\begin{aligned}
g_\delta:= \frac {f}{c_n(1+\delta)( \|\nabla f\|_{\phix,\Omega}+\delta)},
\end{aligned}
\end{equation*}
we obtain
\begin{equation*}
\begin{aligned}
\lim_{\epsilon\to 0^+} \varrho^\epsilon_{\#,\Omega}(g_\delta)
=\varrho_{\phix,\Omega}(c_n |\nabla g_\delta|) \leq  \tfrac{1}{1+\delta}\int_\Omega
\phi\Big(x, \frac{|\nabla f|}{ \|\nabla f\|_{\phix,\Omega}+\delta}\Big)\, dx <1,
\end{aligned}
\end{equation*}
where we used convexity in the last estimate. Thus,
\(\varrho^\epsilon_{\#,\Omega}(g_\delta ) \leq 1\) for all sufficiently small \(\epsi>0\). Consequently, also $\|g_\delta\|^{\epsilon}_{\#,\Omega}\le
1$ for all sufficiently small
\(\epsi>0\); that is,  $\|f\|^{\epsilon}_{\#,\Omega}\le
c_n(1+\delta) ( \|\nabla f\|_{\phix,\Omega}+\delta) $. Letting $\epsi\to 0$ first and then \(\delta\to0\), we obtain \(\limsup_{\epsilon\to 0^+}\| f\|^\epsilon_{\#,\Omega}
\leq
c_n \| \nabla f\|_{\phix,\Omega}\).

Next, we  prove the opposite inequality.
Fix \(\delta>0\), let \(\epsi_j \to0\) as \(j\to\infty\)
be such that \(\liminf_{\epsilon\to 0^+} \|f\|^\epsilon_{\#,\Omega}
= \lim_{j\to\infty}  \|f\|^{\epsilon_j}_{\#,\Omega}\), and set
\begin{equation*}
\begin{aligned}
g_\delta:= \frac {f}{\lim_{j\to\infty}  \|f\|^{\epsilon_j}_{\#,\Omega}+\delta}.
\end{aligned}
\end{equation*}
Because
\(\lim_{j\to\infty}  \|g_\delta\|^{\epsilon_j}_{\#,\Omega}
< 1\), we conclude that \(\|g_\delta\|^{\epsilon_j}_{\#,\Omega}\le 1\)
for all sufficiently large \(j\in\NN\). For all such  \(j\in\NN\),
\(\varrho^{\epsilon_j}_{\#,\Omega}(g_\delta)\le 1\) by
the definition of the norm. Letting \(j\to\infty\),
the previous proposition yields
 $\varrho_{\phix,\Omega}(c_n |\nabla g_\delta|)\le 1$;  so,
$\|c_n |\nabla g_\delta|\|_{\phix,\Omega}\le 1$. Therefore,
$\|c_n|\nabla f|\|_{\phix,\Omega}\le
\lim_{j\to\infty}  \|f\|^{\epsilon_j}_{\#,\Omega}+\delta
= \liminf_{\epsilon\to 0^+} \|f\|^\epsilon_{\#,\Omega}+\delta$.
Letting
\(\delta\to0\), we obtain the desired
inequality.
\smallskip

This completes the proof in the case $\phi\in \Phi(\Omega)$.
If $\phi\in \Phi_w(\Omega)$, we find $\psi\in \Phi(\Omega)$
with $\phi\simeq\psi$ (Remarks~\ref{onPhis} and \ref{Remark:assumptions}).
Then, $\| \nabla f\|_\phix \approx \|\nabla f\|_\psix$
and similarly for the $\#$-norm; so, the claim follows from the first part.
\end{proof}

The next lemma shows that the condition
$\limsup_{\epsilon\to 0^+} \varrho^\epsilon_{\#,\Omega}(f) < \infty$ implies that $f$ is
locally in a Sobolev space. The estimate for the
norm obtained in this way is not uniform, however. Nevertheless, this
information is used later to prove a uniform estimate.

\begin{lemma}\label{lem:pMinus}
Let $\Omega\subset\Rn$ be open, let $\phi\in \Phi_w(\Omega)$
satisfy Assumptions~\eqref{A}, \eqref{aInc}, and \eqref{aDec},
and let $(\hatrho)_\epsilon$ be a family of functions
satisfying \eqref{condition rho 1} and \eqref{condition rho 2}.
Assume that $f\in L^1_{\rm loc}(\Omega)$ and
\[
\limsup_{\epsilon\to 0^+} \varrho^\epsilon_{\#,\Omega}(f)
< \infty.
\]
Then, there is a constant $c>0$ such that, for every $U\subset\subset\Omega$, $\int_U \phi_U^-(|\nabla f(x)|)\, dx \le c$.
Moreover, $\nabla f\in L^{\phiu}_\loc(\Omega;\mathbb{R}^n)$.
\end{lemma}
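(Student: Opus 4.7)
Write $g(r):=\int_{\Omega_r}\phi(x,\tfrac1r M^\#_{B(x,r)}f)\,dx$, so that $\varrho^\epsilon_{\#,\Omega}(f)=\int_0^\infty g(r)\hatrho(r)\,dr$, and fix $C_0>\limsup_{\epsilon\to 0^+}\varrho^\epsilon_{\#,\Omega}(f)$. The plan has three stages: (i) extract by averaging against $\hatrho$ a sequence $r_k\downarrow 0$ along which $g(r_k)\leq 4C_0$; (ii) exploit \eqref{aInc} combined with \eqref{A0} together with a mollification plus Young-type convolution argument to upgrade this to $\nabla f\in L^{\phiu}_{\loc}$; and (iii) invoke the classical pointwise limit $\tfrac1r M^\#_{B(x,r)}f\to c_n|\nabla f(x)|$ at Lebesgue points of $\nabla f$, combined with Fatou, to recover the uniform $\phi^-_U$-modular bound.

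\textbf{Execution.} For stage (i): by \eqref{condition rho 2}, $\int_0^\gamma\hatrho(r)\,dr\geq\tfrac12$ for all sufficiently small $\epsilon$ (depending on $\gamma$). If $g(r)>4C_0$ throughout $(0,\gamma)$, then $\int_0^\gamma g(r)\hatrho(r)\,dr>2C_0$, contradicting $\int_0^\gamma g\hatrho\leq\int_0^\infty g\hatrho\leq C_0$; hence some $r^{*}\in(0,\gamma)$ satisfies $g(r^{*})\leq 4C_0$, and taking $\gamma=1/k$ yields $r_k\downarrow 0$ with $g(r_k)\leq 4C_0$. For $U\subset\subset\Omega$ and $k$ so large that $U\subset\Omega_{r_k}$, the inequality $\phi^-_U(\cdot)\leq\phi(x,\cdot)$ on $U$ gives
\begin{equation}\label{plan eq}
\int_U\phi^-_U\!\left(\tfrac1{r_k}M^\#_{B(x,r_k)}f\right)\,dx\leq g(r_k)\leq 4C_0.
\end{equation}
For stage (ii): \eqref{aInc} combined with \eqref{A0} yields $\phi^-_U(t)\geq c\,t^{\phiu}$ for $t\geq\sigma$ with $c$ independent of $U$, so \eqref{plan eq} translates into an $L^{\phiu}(U)$-bound on $\tfrac1{r_k}M^\#_{B(\cdot,r_k)}f$. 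Setting $f_\eta:=f*\rho_\eta$ on $U_\eta:=U+B(0,\eta)\subset\subset\Omega$, the Jensen-based pointwise inequality $M^\#_{B(x,r)}f_\eta\leq (M^\#_{B(\cdot,r)}f*\rho_\eta)(x)$ (from the triangle inequality applied to the convolution representation of $f_\eta-(f_\eta)_{B(x,r)}$) and Young's inequality give uniform-in-$\eta$ $L^{\phiu}(U)$-bounds on $\tfrac1{r_k}M^\#_{B(\cdot,r_k)}f_\eta$. Since $f_\eta$ is smooth, $\tfrac1{r_k}M^\#_{B(x,r_k)}f_\eta\to c_n|\nabla f_\eta(x)|$ pointwise, so Fatou gives $\|\nabla f_\eta\|_{L^{\phiu}(U)}\leq C(U_\eta)$ uniformly in $\eta$; as $\phiu>1$, weak compactness in $L^{\phiu}$ produces $\nabla f_\eta\rightharpoonup\nabla f$ in $L^{\phiu}(U;\RR^n)$, and hence $\nabla f\in L^{\phiu}_{\loc}(\Omega;\RR^n)$ and $f\in W^{1,1}_{\loc}(\Omega)$.

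\textbf{Closing and main obstacle.} For stage (iii), once $f\in W^{1,1}_{\loc}$ is available, the classical fact $\tfrac1r M^\#_{B(x,r)}f\to c_n|\nabla f(x)|$ at every Lebesgue point of $\nabla f$ permits applying Fatou to \eqref{plan eq} (using that $\phi^-_U$ is increasing and left-continuous, hence lower semicontinuous, inherited from the $\Phi$-function representative provided by Remark~\ref{onPhis}) to obtain $\int_U\phi^-_U(c_n|\nabla f|)\,dx\leq 4C_0$; iterating \eqref{aDec} a bounded number of times absorbs the factor $c_n\leq 1$ and produces the uniform bound $\int_U\phi^-_U(|\nabla f|)\,dx\leq c$ with $c$ independent of $U$. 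I expect the main difficulty to sit in stage (ii): both establishing the pointwise convolution inequality for $M^\#$ uniformly in $r$ and verifying that the weak $L^{\phiu}$-limit of $\nabla f_\eta$ coincides with the distributional gradient of $f$; note that the constant $C(U_\eta)$ genuinely depends on $|U_\eta|$, consistent with the author's remark preceding the lemma that the resulting $L^{\phiu}$-bound is not uniform in $U$, while uniformity of the $\phi^-_U$-modular bound in stage (iii) is recovered because $\phi^-_U$ itself already encodes the $U$-dependence.
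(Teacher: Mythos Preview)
Your argument is correct and follows a genuinely different route from the paper's. The paper fixes $U\subset\subset\Omega$, passes to a convex $\xi\in\Phi$ with $\xi\simeq\phi_U^-$, mollifies $f$ to $G_\delta*f$, applies Lemma~\ref{lem:smooth} (the smooth case) to compute $\lim_\epsilon\varrho^\epsilon_{\#,U_\delta}(G_\delta*f)=\varrho_{\xi,U_\delta}(c_n|\nabla(G_\delta*f)|)$, and then uses the same convolution inequality for $M^\#$ that you use plus Jensen with the convex $\xi$ to bound this modular by $c\,\varrho^\epsilon_{\#,\Omega}(f)$; reflexivity of $L^\xi$ and weak lower semicontinuity of the Orlicz modular finish. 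You instead extract via the averaging trick a sequence $r_k\downarrow 0$ along which the single-scale integral $g(r_k)$ is bounded, drop from $\phi_U^-$ to $t^{\phiu}$ via \eqref{aInc}--\eqref{A0} to run the compactness in the classical space $L^{\phiu}$, and then---once $f\in W^{1,1}_{\loc}$ is known---appeal to a.e.\ $L^1$-differentiability of Sobolev functions to get the pointwise limit $\tfrac1{r_k}M^\#_{B(x,r_k)}f\to c_n|\nabla f(x)|$ and conclude by Fatou directly on \eqref{plan eq}. Your path is more elementary in that it avoids Lemma~\ref{lem:smooth} and reflexivity of the generalized Orlicz space, at the price of importing the Calder\'on--Zygmund $L^1$-differentiability theorem and carrying the two-stage bootstrapping. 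One small remark on stage~(iii): $\phi_U^-$ as an infimum need not itself be left-continuous even if $\phi\in\Phi(\Omega)$; the clean fix is exactly the one you hint at---replace $\phi_U^-$ by its equivalent $\Phi$-representative $\xi$ from Remark~\ref{onPhis} (so $\xi$ is genuinely left-continuous and $\xi\approx\phi_U^-$ by \eqref{aDec}), run Fatou with $\xi$, and transfer back.
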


\begin{proof}
Let $U\subset\subset\Omega$ and note that $\phi_U^-\in \Phi_w$ (the only
nontrivial condition is the limit at
infinity, which follows from \eqref{A0}).
By \cite[Lemma~2.2]{HarHK_pp16}, there exists $\xi\in \Phi$ with $\xi \simeq \phi_U^-$ for which the equivalent constant
depends only on the monotonicity constant
of \(t\mapsto \tfrac{\phi(x,t)}{t}\).
Then, by \eqref{aDec}, we can find \(c>0\),
independent of \(U\), such that \(c^{-1}\phi_U^-
\leq \xi \leq c\, \phi_U^-\) .

Fix \(\delta>0\), and let \(G_\delta\) be a standard
mollifier; that is, $G_\delta(x)=\delta^{-n}G(x/\delta)$, where
$G\in C_0^\infty(B(0,1))$ is a non-negative function,
radially symmetric, and  $\int_{B(0,1)}G(x)\,dx=1$. Let
 $\tau>0$ be such that $U\subset\subset\Omega_\tau$. Then, for $0<\delta<\tau$,  $G_\delta*f \in C^\infty(\Omega_\tau)$
and, by Lemma~\ref{lem:smooth} with
\(\phi=\xi\), \(U:=U_\delta\), and \(\Omega:=\Omega_\tau\),
\begin{equation}\label{applying lem:smooth}
\begin{aligned}
\lim_{\epsilon \to 0^+}\int_0^\infty
\int_{U_{\delta+r}} \xi\Big(\tfrac1r M^\#_{B(x,r)}(G_\delta*f) \Big) \, dx
\, \hatrho(r)\, dr
&=\varrho_{\xi,U_\delta}(c_n|\nabla(G_\delta*f)|)
\\
&\ge \varrho_{\xi,U_\tau}(c_n|\nabla(G_\delta*f)|).
\end{aligned}
\end{equation}

On the other hand, by the triangle inequality and a change on the order of
integration, we obtain, for all $r>0$ and  $x\in U_r$,
\begin{equation}\label{fornextproof}
\begin{aligned}
M^\#_{B(x,r)}(G_\delta*f)
& = \fint_{B(x,r)} | (G_\delta*f)(y) - (G_\delta*f)_{B(x,r)}
| \, dy \\
& = \fint_{B(x,r)} | (G_\delta*f)(y) - (G_\delta*f_{B(\cdot,r)})(x)
| \, dy
\le (G_\delta*M^\#_{B(\cdot,r)}f)(x).
\end{aligned}
\end{equation}
Hence, the monotonicity of \(\xi\) yields
\begin{equation*}
\begin{aligned}
\xi\big(\tfrac1r M^\#_{B(x,r)}(G_\delta*f)
\big) \leq  \xi\big((G_\delta*\tfrac1r M^\#_{B(\cdot,r)}f)(x)
\big).
\end{aligned}
\end{equation*}
For $x\in\Omega_r$, define $g_r(x):= \frac1r M_{B(x,r)}^\#f$.
Since $G_\delta\, dx$ is a probability measure and $\xi$ is convex,
\begin{equation*}
\begin{aligned}
\xi\left(\tfrac1r M^\#_{B(x,r)}(G_\delta*f)\right)
\le
\xi((G_\delta* g_r)(x))
\le (G_\delta * \xi(g_r))(x).
\end{aligned}
\end{equation*}

Integrating $G_\delta * \xi(g_r)$ over
$U_{r+\delta}$ and changing variables and the order of integration,
we obtain
\begin{equation*}
\begin{aligned}
\int_{U_{\delta+r}} \int_{B(x,\delta)} G_\delta(x-z) \xi(g_r(z))\, dz \, dx
&=
\int_{B(0,\delta)} G_\delta(w)\int_{U_{\delta+r}}  \,    \xi(g_r(x-w))\,dx dw\\
&\le
\int_{U_r}  \xi(g_r(y))\, dy.
\end{aligned}
\end{equation*}
Combining the previous two estimates, we conclude that
\begin{equation*}
\begin{aligned}
\int_{U_{\delta+r}} \xi\Big(\tfrac1r M^\#_{B(x,r)}(G_\delta*f)
\Big)\, dx
\le
\int_{U_r} \xi(g_r(x)) \, dx
\le c
\int_{U_r} \phi\Big(x,\tfrac1r M^\#_{B(x,r)}f \Big)\, dx,
\end{aligned}
\end{equation*}
where in the last inequality we used the definition of $g_r$ and the estimate $\xi(t)\le c\, \phi(x,t)$
for $x\in U$. Consequently,
\begin{equation}
\label{eq:estMs3}
\begin{aligned}
\limsup_{\epsilon \to 0^+}\int_0^\infty
\int_{U_{\delta+r}} \xi\Big(\tfrac1r M^\#_{B(x,r)}(G_\delta*f) \Big) \, dx
\, \hatrho(r)\, dr
& \le c \limsup_{\epsilon \to 0^+} \varrho_{\#,U}^\epsilon(f) \le c \limsup_{\epsilon \to 0^+} \varrho_{\#,\Omega}^\epsilon(f)  < \infty,
\end{aligned}
\end{equation}
which, together with \eqref{applying lem:smooth}, shows that
$(\nabla (G_\delta * f))_\delta$ is bounded in $L^{\xi}(U_\tau;\Rn)$.

By \eqref{A0}, \eqref{aInc}, and \eqref{aDec}, $L^{\xi}(U_\tau;\Rn)$
is reflexive (see \cite[Lemma~2.4 and Proposition~4.6]{HarHK_pp16}).
Since $(\nabla (G_\delta * f))_\delta$ is a bounded sequence, it has a
subsequence which converges weakly in $L^{\xi}(U_\tau;\Rn)$
to a function $l\in L^{\xi}(U_\tau;\Rn)$.
Using the fact that $G_\delta*f$ converges to $f$ in $L^1(U)$,
it follows from the definition of weak derivative that $l=\nabla f$.
By weak semi-continuity \cite[Theorem~2.2.8]{DieHHR11}, \eqref{applying lem:smooth}, and \eqref{eq:estMs3}, we obtain that
\begin{equation*}
\begin{aligned}
\int_{U_\tau} \phi^-_U(c_n|\nabla f(x)|)\,dx
&\leq c \rho_{\xi,U_\tau}(c_n|\nabla f|)
\leq c \liminf_{\delta \to 0^+} \rho_{\xi,U_\tau}(c_n|\nabla (G_\delta*f)|)
\\
&\le
c\limsup_{\epsilon \to 0^+} \, \varrho_{\#,\Omega}^\epsilon(f)<\infty.
\end{aligned}
\end{equation*}
Letting \(\tau\to 0\), we obtain $\int_U \phi_U^-(|\nabla f(x)|)\, dx \le c$ by
 monotone convergence and \eqref{aDec}.

Finally, we observe that Assumptions~\eqref{aInc} and \eqref{A0} imply
that $t^{\phiu} \le c(\phi_U^-(t) + 1)$ with $c>0$ independent of $U$.
Hence,
\[
\int_U |\nabla f(x)|^{\phiu} \, dx
\le
c\int_U \phi_U^-(|\nabla f(x)|) + 1\, dx
\le
c + c\, |U|,
\]
which yields \(\nabla f\in L^{\phiu}_\loc(\Omega;\mathbb{R}^n).\)
\end{proof}

We now remove the local-condition from the previous lemma:

\begin{proposition}\label{prop:showsSobolev}
Let $\Omega\subset\Rn$ be open, let $\phi\in \Phi_w(\Omega)$
satisfy Assumptions~\eqref{A}, \eqref{aInc}, and \eqref{aDec},
and let  $(\hatrho)_\epsilon$ be a family of functions
satisfying \eqref{condition rho 1} and \eqref{condition rho 2}.
Assume that $f\in L^1_{\rm loc}(\Omega)$ and
\[
\limsup_{\epsilon\to 0^+} \varrho^\epsilon_{\#,\Omega}(f)
< \infty.
\]
Then, $\nabla f\in L^{\phix}(\Omega;\mathbb{R}^n)$.
\end{proposition}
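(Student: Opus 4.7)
The plan is to upgrade the local Sobolev regularity provided by Lemma~\ref{lem:pMinus} to the global conclusion via mollification and a bootstrap that uses Assumption~\eqref{A}. Concretely, I will show that for every $U\subset\subset\Omega$ there is a constant $C$ independent of $U$ with
\[
\varrho_{\phi(\cdot),U}(c_n|\nabla f|)\le C\limsup_{\epsilon\to 0^+}\varrho^\epsilon_{\#,\Omega}(f)+C\|h\|_{L^1(\Omega)},
\]
where $h$ is supplied by \eqref{A2}. Monotone convergence as $U\nearrow\Omega$ then yields $\varrho_{\phi(\cdot),\Omega}(c_n|\nabla f|)<\infty$, i.e., $\nabla f\in L^{\phi(\cdot)}(\Omega;\mathbb{R}^n)$.

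By Lemma~\ref{lem:pMinus}, $\nabla f$ exists as a weak gradient and belongs to $L^{\phiu}_{\rm loc}(\Omega;\mathbb{R}^n)$, so in particular $\nabla f\in L^1_{\rm loc}(\Omega)$. Fix $U\subset\subset\Omega_\tau$ for some $\tau>0$ and consider the mollifications $f_\delta:=G_\delta\ast f\in C^\infty(\Omega_\tau)$ for $0<\delta<\tau$, with $G_\delta$ a standard mollifier. Then $\nabla f_\delta=G_\delta\ast\nabla f\to\nabla f$ a.e.\ on $U$ as $\delta\to 0^+$, and Lemma~\ref{lem:smooth} applied to $f_\delta$ gives
\[
\varrho_{\phi(\cdot),U}(c_n|\nabla f_\delta|)\le C\liminf_{\epsilon\to 0^+}\varrho^\epsilon_{\#,U}(f_\delta).
\]
Thus the task reduces to bounding the right-hand side uniformly in $\delta$ by $\limsup_{\epsilon\to 0^+}\varrho^\epsilon_{\#,\Omega}(f)$ plus controlled error.

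For this core estimate I would combine two ingredients already used in the proof of Lemma~\ref{lem:pMinus}: the pointwise inequality $\tfrac1r M^\#_{B(x,r)}f_\delta\le c\fint_{B(x,\delta)}\tfrac1r M^\#_{B(y,r)}f\,dy$, and Lemma~\ref{lem:key-estimate} applied to $B=B(x,\delta)$ with integrand $y\mapsto\tfrac1r M^\#_{B(y,r)}f$. Together they yield, for some $\beta''>0$,
\[
\phi\Big(x,\beta''\min\big(\alpha_x,\tfrac1r M^\#_{B(x,r)}f_\delta\big)\Big)\le\fint_{B(x,\delta)}\phi\big(y,\tfrac1r M^\#_{B(y,r)}f\big)\,dy+h(x)+\fint_{B(x,\delta)}h(y)\,dy,
\]
where $\alpha_x:=(\phi^-_{B(x,\delta)})^{-1}(1/|B(x,\delta)|)$. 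Integrating in $x\in U_{r+\delta}$, switching the order of integration on the right, and then integrating against $\hatrho(r)\,dr$, the integrated left-hand side is controlled by $\varrho^\epsilon_{\#,\Omega}(f)+2\|h\|_{L^1(\Omega)}$. To remove the inner truncation at $\alpha_x$, I split $\phi(x,\beta''\tfrac1r M^\#_{B(x,r)}f_\delta)$ via Lemma~\ref{lem:wpm} into the part which matches the truncated estimate plus an error supported on $\{\tfrac1r M^\#_{B(x,r)}f_\delta>\alpha_x\}$; on that exceptional set, Lemma~\ref{onA1s} together with \eqref{A0}, \eqref{A2}, and \eqref{aDec} provide the cap $\phi(x,\beta\alpha_x)\lesssim 1/|B(x,\delta)|+h(x)+1$, so that $\alpha_x\to\infty$ as $\delta\to 0^+$ forces the contribution of the exceptional set to be absorbed into the bound.

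Combining these two steps yields the uniform modular bound on $\varrho_{\phi(\cdot),U}(c_n|\nabla f_\delta|)$; Fatou's lemma together with the a.e.\ convergence $\nabla f_\delta\to\nabla f$ transfers it to $|\nabla f|$ on $U$, and an exhaustion $U\nearrow\Omega$ with monotone convergence finishes the proof. The main obstacle is precisely the removal of the truncation at $\alpha_x$ in Lemma~\ref{lem:key-estimate}: since Assumption~\eqref{A} only allows one to transfer $\phi(x,\cdot)$ to $\phi(y,\cdot)$ for values of the argument at most $\alpha_x$, the exceptional contribution must be controlled uniformly in $\delta$, $\epsilon$, and $r$ using the doubling decay \eqref{aDec}, the cap at $\alpha_x$ supplied by Lemma~\ref{onA1s}, and the integrability of $h$ from \eqref{A2}.
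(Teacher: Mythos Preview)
Your overall strategy---mollify, apply Lemma~\ref{lem:smooth}, bound $\varrho^\epsilon_{\#,U}(G_\delta\!*\!f)$ in terms of $\varrho^\epsilon_{\#,\Omega}(f)$ via Lemma~\ref{lem:key-estimate}, then let $\delta\to 0$ and $U\nearrow\Omega$---is exactly the paper's. The gap is precisely where you flag the obstacle yourself: removing the truncation at $\alpha_x$. Your proposed remedy does not work.

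After splitting via Lemma~\ref{lem:wpm}, the error term is $\phi\big(x,\,c\big(\tfrac1r M^\#_{B(x,r)}(G_\delta\!*\!f)-\alpha_x\big)_+\big)$, supported on $\{\tfrac1r M^\#_{B(x,r)}(G_\delta\!*\!f)>\alpha_x\}$. The ``cap'' $\phi(x,\beta\alpha_x)\lesssim |B(x,\delta)|^{-1}+h(x)+1$ bounds $\phi$ at $\alpha_x$, not at the (possibly much larger) argument actually appearing, and in any case $|B(x,\delta)|^{-1}\to\infty$ as $\delta\to 0$. The heuristic ``$\alpha_x\to\infty$ forces absorption'' fails because, for fixed $\delta$, you must first send $\epsilon\to 0$; in that limit the small-$r$ contribution makes $\tfrac1r M^\#_{B(x,r)}(G_\delta\!*\!f)\to c_n|\nabla(G_\delta\!*\!f)(x)|$, over which you have no control relative to $\alpha_x$.

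The paper's fix is to show \emph{a priori} that $\tfrac1r M^\#_{B(x,r)}(G_\delta\!*\!f)\le c\,\alpha_x$ for $r\le\delta$, so the minimum in Lemma~\ref{lem:key-estimate} is always attained by the average and no exceptional set arises. This uses the full output of Lemma~\ref{lem:pMinus}---not merely $\nabla f\in L^{\phiu}_{\rm loc}$, but the uniform bound $\int_B\phi_B^-(|\nabla f|)\,dx\le c$ for every $B\subset\subset\Omega$. After normalizing $f$ via \eqref{aDec} so that $\int_B\xi_B^-(|\nabla f|)\,dx\le 3^{-n}$ on small balls, Poincar\'e gives $\tfrac1r M^\#_{B(x,r)}(G_\delta\!*\!f)\lesssim \fint_{B(x,r)}\fint_{B(y,\delta)}|\nabla f|\,dz\,dy$, and \cite[Lemma~4.3]{Has15} bounds each inner average by $(\xi_{B(y,\delta)}^-)^{-1}(|B(y,\delta)|^{-1})\approx\alpha_x$ (the last equivalence via Lemmas~\ref{Lem:auxformain} and~\ref{lem:As}). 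The range $r>\delta$ is handled separately with the $L^{\phiu}$-bound and \eqref{condition rho 2}.
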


\begin{proof}
Let $U\subset\subset V\subset\subset\Omega$ and $\delta\in (0,\frac16)$ be such that
$U^\delta\subset V\subset\Omega_\delta$. Let
$G_\delta$ and $g_r$ be as in the previous lemma.
By Remarks~\ref{onPhis} and \ref{Remark:assumptions}, there exists $\xi\in \Phi(\Omega)$ equivalent to $\phi$ which
satisfies the same assumptions.
In Lemma~\ref{lem:pMinus}, we proved that
$\int_U \xi_U^-(|\nabla f|)\, dx \le c$ for every $U\subset\subset\Omega$.
In view of \eqref{aDec}, by scaling the function \(f\), if necessary, we may assume that
\begin{equation}
\label{eq:byscalingf}
\begin{aligned}
\int_B \xi_B^-(|\nabla f(x)|)\, dx \le 3^{-n} \text{ for every ball
$B\subset\subset\Omega$
with $|B|\le 1$.}
\end{aligned}
\end{equation}

Let $r \in (0, \delta]$ and $x\in U_r$. By the Poincar\'e and triangle inequalities,
\begin{equation*}
\begin{aligned}
\frac1r M^\#_{B(x,r)}(G_\delta*f)
\le
c \fint_{B(x,r)} |\nabla( G_\delta*f)(y)|\, dy
\le
c \fint_{B(x,r)} \int_{B(y,\delta)} G_\delta(y-z)
|\nabla f(z)|\, dz\, dy.
\end{aligned}
\end{equation*}
Note that
$G_\delta \le c \frac{\chi_{B(0,\delta)}}{|B(0,\delta)|}$;
hence, by \cite[Lemma~4.3]{Has15} and \eqref{eq:byscalingf},
together with the previous estimate, we obtain
\begin{equation*}
\begin{aligned}
\tfrac1r M^\#_{B(x,r)}(G_\delta*f)
&\le
c \fint_{B(x,r)} \fint_{B(y,\delta)} |\nabla f(z)|\,
dz\, dy \\
&\le
c \fint_{B(x,r)} (\xi_{B(y,\delta)} ^-)^{-1}\bigg(
\fint_{B(y,\delta)}  \xi_{B(y,\delta)} ^-(|\nabla f(z)|)\,
dz\bigg)\, dy \\
&\le c \fint_{B(x,r)} (\xi_{B(y,\delta)} ^-)^{-1}\Big(
\tfrac{1}{3^n|B(y,\delta)|}\Big)\, dy.
\end{aligned}
\end{equation*}
Observe that $B(y,\delta)\subset B(x,3\delta)$
for every $y\in B(x,r)$, and
 $|B(y,\delta)|=|B(x,\delta)|$ and $|B(x,3\delta)|=3^n |B(x,\delta)|$.
By Lemmas~\ref{Lem:auxformain} and \ref{lem:As},
\begin{equation*}
\begin{aligned}
(\xi_{B(y,\delta)} ^-)^{-1}(\tfrac{1}{3^n|B(y,\delta)|}) \approx
(\xi_{B(x,3\delta)} ^-)^{-1}(\tfrac{1}{3^n|B(x,\delta)|}) \approx
(\xi_{B(x,\delta)} ^-)^{-1}(\tfrac{1}{3^n|B(x,\delta)|})
\end{aligned}
\end{equation*}
when $y\in B(x,r)$.
Then, because  \((\xi_{B(x,\delta)} ^-)^{-1 }\) is  increasing, we obtain
\[
\tfrac1r M^\#_{B(x,r)}(G_\delta*f) \le c
 (\xi_{B(x,\delta)} ^-)^{-1}(3^{-n}|{B(x,\delta)} |^{-1})\leq c
 (\xi_{B(x,\delta)} ^-)^{-1}(|{B(x,\delta)} |^{-1}).
\]

On the other hand, by \eqref{fornextproof}
, we  have also
$\tfrac1r M^\#_{B(x,r)}(G_\delta*f) \le (G_\delta*g_r)(x)\leq
c \fint_{B(x,\delta)} |g_r|\, dy$.
Thus,
\[
\frac1r M^\#_{B(x,r)}(G_\delta*f)
\le
c \min \bigg\{ \fint_{B(x,\delta)} |g_r|\, dy , (\xi_{B(x,\delta)}^-)^{-1}\big(\tfrac1{|{B(x,\delta)} |}\big) \bigg\}.
\]
Then,  by the monotonicity of \(\xi(x,\cdot)\)
and  Lemma~\ref{lem:key-estimate}, invoking
\eqref{aDec} if necessary,
it follows  that
\[
\xi\Big(x, \tfrac1r M^\#_{B(x,r)}(G_\delta*f) \Big)
\le
c \bigg(\fint_{B(x,\delta)} \big(\xi(y,g_r(y)) + h(y)\big)\, dy + h(x)\bigg).
\]
Integrating this estimate over $x\in U_r$, changing the order of integration  as in Lemma~\ref{lem:sobolev}, and
using the inclusion $(U_r)^\delta \subset V_r$, we obtain
\begin{equation*}
\begin{aligned}
\int_{U_r} \xi\big(x,\tfrac1r M^\#_{B(x,r)}(G_\delta*f)
\big)\, dx &\le
c\bigg( \int_{(U_r)^\delta} \xi(y,g_r(y)) + h(y)\, dy
+ \int_{U_r} h(x)\, dx\bigg)\\
& \le c\bigg( \int_{V_r} \xi\big(y,\tfrac1r M^\#_{B(y,r)}f\big) \, dy+2\Vert  h\Vert_{L^1(\Omega)}\bigg)
\end {aligned}
\end{equation*}
for \(r\le \delta\).

Next, we consider $r \in (\delta,\infty)$.
Fix \(x\in U_r\), and recall that \(\nabla
f \in L^{\phiu}(V;\RR^n)\) by Lemma~\ref{lem:pMinus}. By the Poincar\'e and H\"older inequalities,
we have
\begin{equation*}
\begin{aligned}
\tfrac1r M^\#_{B(x,r)}(G_\delta*f) &\leq
c\fint_{B(x,r)} |\nabla(G_\delta*f)(y)|\,dy
\leq c \fint_{B(x,r)} \Vert G_\delta\Vert_{(\phiu)',
B(0,\delta)} \Vert \nabla f\Vert_{\phiu,
B(y,\delta)}\,dy\\
&\leq c_\delta \Vert \nabla f\Vert_{\phiu,
U^\delta} \leq c_\delta \Vert \nabla f\Vert_{\phiu,
V}.
\end{aligned}
\end{equation*}
Hence,
\begin{equation*}
\int_{U_r} \xi\Big(x, \tfrac1r M^\#_{B(x,r)}(G_\delta*f)\Big)\,dx
\leq \int_{U_r} \xi\big(x,c_\delta \Vert \nabla f\Vert_{\phiu, V} \big)\,dx
\end{equation*}
for $r> \delta$.

Combining the two cases, \(r\leq \delta\)
and \(r>\delta\), we obtain
\begin{align*}
&\int_0^\infty \int_{U_r} \xi\big(x,\tfrac1r
M^\#_{B(x,r)}(G_\delta*f) \big)\, dx
\, \hatrho(r)\, dr \\
& \quad \le
c \int_0^\delta \bigg(\int_{V_r} \xi\big(x,\tfrac1r
M^\#_{B(x,r)}f\big) \, dx+2\Vert  h\Vert_{L^1(\Omega)}
\bigg) \hatrho(r)\, dr
+ \int_\delta^\infty \int_{U_r}
\xi\big(x,c_\delta \Vert
\nabla f\Vert_{\phiu, V} \big)\,dx\, \hatrho(r)\,
dr.
\end{align*}
The second term on the right-hand side
of the previous inequality tends to zero
as $\epsilon\to 0$ by \eqref{loc} and
\eqref{condition
rho 2}.  Consequently,
\begin{align*}
\limsup_{\epsilon \to 0}\int_0^\infty
\int_{U_r} \xi\Big(x,\tfrac1r M^\#_{B(x,r)}(G_\delta*f)
\Big)\, dx
\, \hatrho(r)\, dr
\le
c \limsup_{\epsilon \to 0} \rho_{\#,V}^\epsilon(f) + c < \infty
\end{align*}
by hypothesis, where we also used \eqref{condition rho 1}  and
the fact that \(\xi\approx \phi\).

Thus,  arguing as in  Lemma~\ref{lem:pMinus},
we conclude that $\nabla
f\in L^{\xi(\cdot)}(U;\mathbb{R}^n)$.
Hence, also $\nabla
f\in L^{\phi(\cdot)}(U;\mathbb{R}^n)$
(see Remark~\ref{casePhi}).

Finally, we appeal to Proposition~\ref{prop:SobolevCase}
to conclude that
\[
\varrho_{\phix,U}(c_n |\nabla f|)
\leq c
\limsup_{\epsilon\to 0^+} \varrho^\epsilon_{\#,U}(f)
\le c
\limsup_{\epsilon\to 0^+} \varrho^\epsilon_{\#,\Omega}(f)
< \infty.
\]
Because the upper bound in the last estimate is independent of $U$,
we conclude the proof of Proposition~\ref{prop:showsSobolev} by monotone convergence
as $U\nearrow \Omega$.
\end{proof}

Combining the propositions of this section, we arrive at the following
theorem.

\begin{theorem}\label{main theorem gen}
Let $\Omega\subset\Rn$ be an open set, let $\phi\in \Phi_w(\Omega)$  satisfy Assumptions~\eqref{A}, \eqref{aInc}, and \eqref{aDec}, and let $(\hatrho)_\epsilon$ 
be a family of functions
satisfying \eqref{condition rho 1} and \eqref{condition rho 2}.
Assume  that $f\in L^1_{\rm loc}(\Omega)$.
Then,
\begin{equation*}
\begin{aligned}
\nabla f\in L^{\phix}(\Omega;\Rn)
\quad\Leftrightarrow\quad
\limsup_{\epsilon\to 0^+} \varrho^\epsilon_{\#,\Omega}(f) <\infty.
\end{aligned}
\end{equation*}
In this case,
\begin{equation*}
\lim_{\epsilon\to0^+} \varrho^\epsilon_{\#,\Omega}(f)\approx \varrho_{\phix,\Omega}(c_n |\nabla f|)
\qquad\text{and}\qquad
\lim_{\epsilon\to0^+}\| f\|^\epsilon_{\#,\Omega} \approx c_n \| \nabla f\|_{\phix,\Omega},
\end{equation*}
where $c_n := \fint_{B(0,1)} |x\cdot e_1| \, dx$.
\end{theorem}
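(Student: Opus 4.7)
The plan is essentially to stitch together the three propositions established in this section, since the theorem is explicitly stated as their combined consequence. I would organize the proof as a short two-part argument matching the ``$\Leftrightarrow$'' in the statement.

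First I would prove the forward implication: assume $\nabla f\in L^{\phix}(\Omega;\R^n)$. Then Proposition~\ref{prop:SobolevCase} directly gives
\[
\lim_{\epsilon\to0^+}\varrho^\epsilon_{\#,\Omega}(f)\approx \varrho_{\phix,\Omega}(c_n|\nabla f|)<\infty,
\]
where the finiteness of the right-hand side follows from $|\nabla f|\in L^\phix(\Omega)$ together with the doubling property \eqref{aDec} (which gives $\varrho_{\phix,\Omega}(c_n|\nabla f|)\le c\,\varrho_{\phix,\Omega}(|\nabla f|/\|\nabla f\|_{\phix,\Omega})\cdot\max\{\|\nabla f\|_{\phix,\Omega}^{\phid},\|\nabla f\|_{\phix,\Omega}^{\phiu}\}<\infty$). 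In particular, $\limsup_{\epsilon\to0^+}\varrho^\epsilon_{\#,\Omega}(f)<\infty$, and simultaneously the modular equivalence appearing in the second displayed line of the theorem is established.

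Next I would prove the reverse implication: assume $f\in L^1_{\rm loc}(\Omega)$ and $\limsup_{\epsilon\to0^+}\varrho^\epsilon_{\#,\Omega}(f)<\infty$. Then Proposition~\ref{prop:showsSobolev} applies directly and yields $\nabla f\in L^{\phix}(\Omega;\R^n)$. Once this is known, Proposition~\ref{prop:SobolevCase} again supplies the modular equivalence $\lim_{\epsilon\to0^+}\varrho^\epsilon_{\#,\Omega}(f)\approx\varrho_{\phix,\Omega}(c_n|\nabla f|)$, and Proposition~\ref{prop:SobolevCase2} supplies the companion norm equivalence
\[
\lim_{\epsilon\to0^+}\|f\|^\epsilon_{\#,\Omega}\approx c_n\|\nabla f\|_{\phix,\Omega}.
\]

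There is no real obstacle here since all the analytic content has been absorbed into the three propositions: Proposition~\ref{prop:SobolevCase} carries the upper/lower modular bounds via the smooth approximation argument of Lemma~\ref{lem:smooth} together with the \emph{a priori} bound of Lemma~\ref{lem:sobolev}; Proposition~\ref{prop:showsSobolev} carries the bootstrap from $\nabla f\in L^{\phiu}_{\rm loc}$ (Lemma~\ref{lem:pMinus}) to $\nabla f\in L^{\phix}(\Omega)$ via mollification, the key Jensen-type estimate of Lemma~\ref{lem:key-estimate}, and the equivalence of \eqref{A1} with its $(\phi^-_B)^{-1}$ formulation (Lemmas~\ref{onA1s}, \ref{Lem:auxformain}, \ref{lem:As}); and Proposition~\ref{prop:SobolevCase2} transfers the modular equivalence into a quasi-norm equivalence through the standard scaling trick. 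The only care required is to note that the finiteness claim $\varrho_{\phix,\Omega}(c_n|\nabla f|)<\infty$ (necessary to interpret ``$\approx$'' on both sides) uses \eqref{aDec}, and that in the weak $\Phi$-function setting the ``$\approx$'' must not be upgraded to equality, which is consistent with the statement.
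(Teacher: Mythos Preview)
Your proposal is correct and matches the paper's own treatment: the paper simply writes ``Combining the propositions of this section, we arrive at the following theorem,'' and you have spelled out exactly that combination, invoking Proposition~\ref{prop:SobolevCase} for the forward implication and the modular equivalence, Proposition~\ref{prop:showsSobolev} for the reverse implication, and Proposition~\ref{prop:SobolevCase2} for the norm equivalence. Your added justification of the finiteness of $\varrho_{\phix,\Omega}(c_n|\nabla f|)$ via \eqref{aDec} is fine, though note that Proposition~\ref{prop:SobolevCase} already asserts this finiteness as part of its conclusion.
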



\section*{Acknowledgements}
This work was partially supported by the Funda\c{c}\~ao para a Ci\^encia e a
Tecnologia (Portuguese Foundation for Science and Technology) through the projects
UID/MAT/00297/ 2013 (Centro de Matemática e Aplicações) and EXPL/MAT-CAL/0840/2013.
Part of this work was done while the authors enjoyed the hospitality of University of Turku and
of the Centro de Matem\'atica e Aplica\c{c}\~{o}es (CMA), Faculdade de Ci\^{e}ncias e Tecnologia, Universidade Nova de Lisboa.



\begin{thebibliography}{99}

\bibitem{AlmH10}
A.\ Almeida and P.\ H\"ast\"o:
Besov spaces with variable smoothness and integrability,
\textit{J.\ Funct.\ Anal.}~\textbf{258} (2010), no.~5, 1628--1655.

\bibitem{AK09}
G. Aubert and P. Kornprobst: Can the nonlocal characterization of Sobolev spaces
by Bourgain et al. be useful for solving variational problems? \textit{SIAM J. Numer.
Anal.}~\textbf{47}, (2009), 844-860.

\bibitem{BarCM15}
P.\ Baroni, M.\ Colombo and G.\ Mingione:
Harnack inequalities for double phase functionals.
Nonlinear Anal. 121 (2015), 206--222.

\bibitem{BarCM16}
P.\ Baroni, M.\ Colombo and G.\ Mingione:
Non-autonomous functionals, borderline cases and related function classes,
St Petersburg Math. J. 27 (2016), 347--379.

\bibitem{BoBrMi1}
J. Bourgain, H. Br\'ezis, and P. Mironescu: Another look at Sobolev spaces, \textit{Optimal control and partial differential equations. In honour of Professor Alain Bensoussan's 60th birthday. Proceedings of the conference, Paris, France, December 4, 2000}~\textbf{} (2001), 439--455.

\bibitem{BoBrMi2}
J. Bourgain, H. Br\'ezis, and P. Mironescu: Limiting embedding theorems for $W^{s,p}$ when $s\uparrow 1$ and applications, \textit{J. Anal. Math.}~\textbf{87} (2002), 77--101.


\bibitem{CheLR06}
Y.\ Chen, S.\ Levine, and R.\ Rao:
Variable exponent, linear growth functionals in image restoration,
\textit{SIAM J.\ Appl.\ Math.}~\textbf{66} (2006), no.~4, 1383--1406.

\bibitem{ColM15a}
M.\ Colombo and G.\ Mingione:
Regularity for Double Phase Variational Problems
Arch. Ration. Mech. Anal. 215 (2015), no. 2, 443--496.

\bibitem{ColM15b}
M.\ Colombo and G.\ Mingione:
Bounded minimisers of double phase variational integrals,
Arch. Ration. Mech. Anal. 218 (2015), no. 1, 219--273.

\bibitem{ColM16}
M.\ Colombo and G.\ Mingione:
Calder\'on--Zygmund estimates and non-uniformly elliptic operators,
J. Funct. Anal. 270 (2016), 1416--1478.

\bibitem{CruF13}
D. Cruz-Uribe and A. Fiorenza: Variable Lebesgue Spaces. Foundations and Harmonic Analysis,
 Birkh\"auser/Springer, New York, 2013.

\bibitem{CruH_pp16}
D.\ Cruz-Uribe and P.\ H\"ast\"o:
Extrapolation and interpolation in generalized Orlicz spaces,
Preprint (2016).

\bibitem{DieH07}
L.\ Diening and P.\ H\"{a}st\"{o}:
Variable exponent trace spaces,
\textit{Studia Math.}~\textbf{183} (2007) 127--141.

\bibitem{DieHHR11}
L.\ Diening, P.\ Harjulehto, P.\ H\"{a}st\"{o}, and M.\
R{\r{u}}{\v{z}}i{\v{c}}ka: Lebesgue and Sobolev Spaces with Variable
Exponents, Lecture Notes in Mathematics, vol. \textbf{2017},
Springer-Verlag, Berlin, 2011.


\bibitem{DieHR09}
L.\ Diening, P.\ H\"{a}st\"{o} and S.\ Roudenko:
Function spaces of variable smoothness and integrability,
\textit{J.\ Funct.\ Anal.}~\textbf{256} (2009), no.~6, 1731--1768.

\bibitem{FerKR15}
R.\ Ferreira, C.\ Kreisbeck and A.M.\ Ribeiro:
Characterization of polynomials and higher-order Sobolev
spaces in terms of functionals involving difference quotients,
\textit{Nonlinear Analysis}~\textbf{112} (2015), 199--214.


\bibitem{GiaP13}
F.\ Giannetti and A.\ Passarelli di Napoli:
Regularity results for a new class of functionals with
non-standard growth conditions,
J. Differential Equations 254 (2013) 1280--1305.

\bibitem{HarH_pp15}
P.\ Harjulehto and P.\ H\"ast\"o:
Riesz potential in generalized Orlicz Spaces,
Forum Math., to appear.
DOI: 10.1515/forum-2015-0239

\bibitem{HarHK_pp16}
P. Harjulehto, P. H\"ast\"o and R. Kl\'en:
Generalized Orlicz  spaces and related PDE,
Nonlinear Anal. 143 (2016), 155--173.

\bibitem{HarHLT13}
P.\ Harjulehto, P.\ H\"ast\"o, V.\ Latvala and O.\ Toivanen:
Critical variable exponent functionals in image restoration,
Appl. Math. Letters 26 (2013), 56--60.


\bibitem{HarHT_pp16}
P.\ Harjulehto, P.\ H\"ast\"o and O.\ Toivanen:
H\"older regularity of quasiminimizers under generalized growth conditions,
Preprint (2016).

\bibitem{Has15}
 P.\ H\"{a}st\"{o}:
The maximal operator on generalized Orlicz spaces,
\textit{J. Funct. Anal.}~\textbf{269}
(2015), 4038--4048.

\bibitem{Has16}
P.\ H\"ast\"o:
Corrigendum to ``The maximal operator on generalized Orlicz spaces'' [J. Funct. Anal. 269 (2015) 4038--4048],
J. Funct. Anal. 271 (2016), no. 1, 240--243.



\bibitem{HasR_pp15}
P.\ H\"{a}st\"{o} and A.\ Ribeiro:
Characterization of the variable exponent Sobolev norm without derivatives,
\textit{Commun.\ Contemp.\ Math.}, to appear.
DOI: 10.1142/S021919971650022X



\bibitem{LeoS11}
G.\ Leoni and D.\ Spector:
Characterization of Sobolev and BV spaces,
\textit{J.\ Funct.\ Anal.}~\textbf{261} (10) (2011), 2926--2958.




\bibitem{MaeMOS13a}
F.-Y.\ Maeda, Y.\ Mizuta, T.\ Ohno and T.\ Shimomura:
Boundedness of maximal operators and Sobolev's inequality on Musielak-Orlicz-Morrey spaces,
Bull. Sci. Math. 137 (2013), 76--96.


\bibitem{MizOS08}
Y. Mizuta, T. Ohno and T. Shimomura:
Sobolev's inequalities and vanishing
integrability for Riesz potentials of functions in the generalized Lebesgue space $L^\px \log L^\qx$,
J. Math. Anal. Appl. 345 (2008), 70--85.

\bibitem{Mus83}
J. Musielak:
\emph{Orlicz spaces and modular spaces},
Lecture Notes in Mathematics, 1034. Springer, Berlin, 1983.

\bibitem{Ok16a}
J.\ Ok:
Gradient estimates for elliptic equations with $L^{p(\cdot)}\log L$ growth, Calc. Var. Partial Differential Equations 55 (2016), no. 2, 1--30.

\bibitem{Ok16b}
J.\ Ok:
Regularity results for a class of obstacle problems with nonstandard growth, J. Math. Anal. Appl. 444 (2016), no. 2, 957--979.

\bibitem{ROF92}
L. Rudin, S. Osher, and E. Fatemi: Nonlinear total variation based noise removal
algorithms, \textit{Physica D}~\textbf{60}, (1992), no.1-4, 259-268.



\bibitem{Swi14}
A.\ \'Swierczewska-Gwiazda:
Nonlinear parabolic problems in Musielak--Orlicz spaces,
Nonlinear Anal. 98 (2014), 48--65.


\end{thebibliography}
\end{document}